\documentclass[11pt]{article}
\textwidth 17cm
\textheight 23cm
\oddsidemargin 0.25cm
\addtolength{\voffset}{-2.4cm}
\addtolength{\hoffset}{-0.5cm}
\setlength{\parindent}{12pt}
\setlength{\parskip}{3pt}
\usepackage{amssymb,amsmath,epsfig}
\usepackage[colorlinks=false,breaklinks=true,linkcolor=blue]{hyperref}
\usepackage{graphics,psfrag,graphicx,color}
\usepackage{diagbox}
\usepackage{float,hhline}

\pdfminorversion=7
\numberwithin{equation}{section}
%
% OUR DEFINITIONS %%%%%%%%%%%%%%%%%%%%%%%%%%%%%%%%%%%
%

\newcommand{\ds}{\displaystyle}

%%%% greek bold letters

\newcommand{\bsi}{{\boldsymbol\sigma}}

\newcommand{\btau}{{\boldsymbol\tau}}

%%%% bold letters

\newcommand{\bv}{{\mathbf{v}}}
\newcommand{\bw}{{\mathbf{w}}}
\newcommand{\f}{\mathbf{f}}
\newcommand{\g}{\mathbf{g}}
\newcommand{\ba}{\mathbf{a}}
\newcommand{\bb}{\mathbf{b}}

\newcommand{\bi}{\mathbf{i}}

\newcommand{\bu}{\mathbf{u}}

\newcommand{\bz}{{\mathbf{z}}}
\newcommand{\bt}{{\mathbf{t}}}
\newcommand{\bn}{{\mathbf{n}}}

\newcommand{\0}{{\mathbf{0}}}

\def\bK{\mathbf{K}}
\def\bI{\mathbb{I}}

\def\bV{\mathbf{V}}

\def\bQ{\mathbf{Q}}

\def\bRT{\mathbf{RT}}
\def\bBR{\mathbf{BR}}

\def\bx{\mathbf{x}}

\def\bz{\mathbf{z}}
\newcommand{\bL}{\mathbf{L}}
\newcommand\bH{\mathbf{H}}

%%%%%%  leters in mathbb style

\newcommand\bbI{\mathbb{I}}

\newcommand\bbH{\mathbb{H}}

\newcommand\bbL{\mathbb{L}}

%%%%%%  leters in mathcal style
\newcommand{\cA}{\mathcal{A}}
\newcommand{\cB}{\mathcal{B}}
\newcommand{\cC}{\mathcal{C}}

\newcommand{\cT}{\mathcal{T}}

\newcommand{\cM}{\mathcal{M}}
\newcommand{\cD}{\mathcal{D}}
\newcommand{\cO}{\mathcal{O}}
\newcommand{\cP}{\mathcal{P}}

%%%%%%%%   math operators
\def\R{\mathrm{R}}

\def\H{\mathrm{H}}

\def\L{\mathrm{L}}

\def\W{\mathrm{W}}
\def\re{\mathsf{e}}
\def\sr{\mathsf{r}}

\def\re{\mathsf{e}}
\def\sr{\mathsf{r}}
\def\rB{\mathrm{B}}
\def\rD{\mathrm{D}}

\def\rP{\mathrm{P}}
\def\rp{\mathrm{p}}
\def\rq{\mathrm{q}}
\def\rt{\mathrm{t}}

\def\tF{\mathtt{F}}
\def\tBFD{\mathtt{BFD}}

\def\ttd{\mathtt{d}}

\def\bdiv{\mathbf{div}}

\def\tr{\mathrm{tr}}

\def\div{\mathrm{div}}
\def\dist{\mathrm{dist}\,}

\def\pil{\left<}
\def\pir{\right>}

\def\coeff{\mathbf{coeff}}
\def\iter{\mathtt{iter}}
\def\tol{\textsf{tol}}
\def\DOF{\mathtt{DOF}}

%%%% math operators

\def\qin{{\quad\hbox{in}\quad}}
\def\qon{{\quad\hbox{on}\quad}}
\def\qan{{\quad\hbox{and}\quad}}

\def\ov{\overline}

\def\wt{\widetilde}
\def\wh{\widehat}
%%%% definition of constants

%%%% functional spaces

% END OF OUR DEFINITIONS
%

   %barva
   %barva
\usepackage[normalem]{ulem}
\newtheorem{thm}{Theorem}[section]

\newtheorem{lem}[thm]{Lemma}

\newenvironment{proof}{\noindent{\it Proof.}}{\hfill$\square$}

\numberwithin{equation}{section}
\numberwithin{figure}{section}
\numberwithin{table}{section}
% END OF OUR DEFINITIONS %%%%%%%%%%%%%%%%%%%%%%%%%%%%%

\allowdisplaybreaks

%**********************************************************************
%**********************************************************************

\title{A mixed FEM for the coupled Brinkman--Forchheimer/Darcy problem}
%\title{A mixed FEM for a filtration heterogeneous model}

\author{{\sc Sergio Caucao}\thanks{Departamento de Matem\'atica y F\'isica Aplicadas, 
Universidad Cat\'olica de la Sant\'isima Concepci\'on, Casilla 297, Concepci\'on, Chile, and Grupo de Investigaci\'on en An\'alisis Num\'erico y C\'alculo Cient\'ifico, GIANuC$^2$, Concepci\'on, Chile, 
email: {\tt scaucao@ucsc.cl}. Supported in part by ANID-Chile through the project {\sc Centro de Mode\-lamiento Matem\'atico} (FB210005) and Fondecyt project 11220393.}
\quad
{\sc Marco Discacciati}\thanks{Department of Mathematical Sciences, Loughborough University, Epinal Way, Loughborough LE11 3TU, UK, email: {\tt m.discacciati@lboro.ac.uk}.}}

\date{ }

\begin{document}

\maketitle

\begin{abstract}
\noindent
This paper develops the {\it a priori} analysis of a mixed finite 
element method for the filtration of an incompressible fluid through 
a non-deformable saturated porous medium with heterogeneous permeability. 
Flows are governed by the Brinkman--Forchheimer and Darcy equations
in the more and less permeable regions, respectively,
and the corresponding transmission conditions are given by
mass conservation and continuity of momentum.
We consider the standard mixed formulation in the Brinkman--Forchheimer
domain and the dual-mixed one in the Darcy region,
and we impose the continuity of the normal velocities 
by introducing suitable Lagrange multiplier.  
%\sout{, which yields
%the introduction of the trace of the Darcy pressure as a suitable
%Lagrange multiplier.}
The finite element discretization involves Bernardi--Raugel and Raviart--Thomas elements for the velocities,
piecewise constants for the pressures, and continuous piecewise linear
elements for the Lagrange multiplier.
Stability, convergence, and {\it a priori} error estimates for the 
associated Galerkin scheme are obtained. 
Numerical tests illustrate the theoretical results.  

\end{abstract}

\noindent
{\bf Key words}: Brinkman--Forchheimer problem, Darcy problem, pressure-velocity formulation, mixed finite element methods, {\it a priori} error analysis

\smallskip\noindent
{\bf Mathematics subject classifications (2010)}: 65N30, 65N12, 65N15, 74F10, 76D05, 76S05

\maketitle

%**********************************************************************
%**********************************************************************

\section{Introduction}

This paper focuses on the formulation, analysis and numerical approximation of a non-linear coupled problem to model the filtration of an incompressible fluid through a non-deformable saturated porous medium with heterogeneous permeability. More precisely, we consider the case where two different regions are present inside the porous medium domain. In one of them, the permeability and the Reynolds number are low enough to ensure that the classical Darcy's law \cite{Darcy:1856:LFP} provides a valid model to describe the motion of the fluid. In the second region, this is no longer the case as the permeability becomes higher and the effects of frictional forces and inertia cannot be neglected due to the higher flow rate. Therefore, the nonlinear Brinkman--Forchheimer model (see, e.g., \cite{Ehlers:2022:AAM,cy-2021,covy-2022}) must be introduced to accurately represent the fluid flow taking into account the increased flow rates and the effect of viscous forces.  
Similar models have been recently considered in \cite{lcs2021}, and they have been used, e.g., to model fractures \cite{Frih:2008:ComputGeosci} with a porous domain.

This modeling approach gives rise to a global nonlinear coupled model defined in neighboring but non-overlapping regions inside the porous medium. To the best of the authors' knowledge, the well-posedness of this problem has not been studied yet, and suitable discretization techniques must be introduced to guarantee mass conservation throughout the porous medium domain as well as an accurate representation of the fluid velocity and pressure.

These issues are addressed in this paper which is organized as follows. 
%\sout{In the remainder of this section we introduce some standard notation and needed functional spaces.}
In Section \ref{sec:continuous-formulation} we introduce the coupled Brinkman--Forchheimer/Darcy problem and its variational formulation. 
The analysis of the problem at the continuous level is carried out in Section \ref{sec:analysis-continuous-problem}, while Section \ref{sec:galerkin-approximation} focuses on the Galerkin finite element approximation of the coupled problem and on its \textit{a priori} analysis. Two numerical experiments are finally presented in Section \ref{sec:numerical-results} to illustrate the theoretical results.

We conclude this section by introducing some notations that will be used throughout the rest of the paper.
Let $\cO\subset \R^n$, $n\in \{2,3\}$, denote a domain with Lipschitz boundary $\Gamma$. 
For $s\geq 0$ and $\rp\in[1,+\infty]$, we denote by $\L^\rp(\cO)$ and $\W^{s,\rp}(\cO)$ 
the usual Lebesgue and Sobolev spaces endowed with the norms $\|\cdot\|_{0,\rp;\cO}$ and $\|\cdot\|_{s,\rp;\cO}$, respectively.
Note that $\W^{0,\rp}(\cO)=\L^\rp(\cO)$. 
If $\rp = 2$, we write $\H^{s}(\cO)$ instead of $\W^{s,2}(\cO)$, and denote 
the corresponding norm by $\|\cdot\|_{s,\cO}$. We will denote the corresponding vectorial and tensorial counterparts of a generic scalar functional space $\H$ by $\bH$ and $\bbH$. 
The $\L^2(\Gamma)$ inner product or duality pairing
is denoted by $\pil\cdot,\cdot\pir_\Gamma$.
In turn, for any vector field $\bv:=(v_i)_{i=1,n}$, we set the gradient and divergence operators as
\begin{equation*}
\nabla\bv := \left(\frac{\partial\,v_i}{\partial\,x_j}\right)_{i,j=1,n}\qan
\div(\bv) := \sum^{n}_{j=1} \frac{\partial\,v_j}{\partial\,x_j}.
\end{equation*}
When no confusion arises $| \cdot |$ will denote the Euclidean norm in $\R^n$ or $\R^{n\times n}$.
In addition, in the sequel we will make use of the well-known H\"older inequality given by
\begin{equation*}%\label{eq:Holder-inequality}
\int_{\cO} |f\,g| \leq \|f\|_{0,\rp;\cO}\,\|g\|_{0,\rq;\cO}
\quad \forall\, f\in \L^\rp(\cO),\,\forall\, g\in \L^\rq(\cO), 
\quad\mbox{with}\quad \frac{1}{\rp} + \frac{1}{\rq} = 1 \,.
\end{equation*}
Finally, we recall that $\H^1(\cO)$ is continuously embedded into $\L^\rp(\cO)$ for $\rp\geq 1$ 
if $n=2$ or $\rp\in [1,6]$ if $n=3$.
More precisely, we have the following inequality
\begin{equation}\label{eq:Sobolev-inequality}
\|w\|_{0,\rp;\cO} 
\,\leq\, C_{i_\text{p}} %\|i_\rp\|\,
\|w\|_{1,\cO}\quad 
\forall\,w \in \H^1(\cO), 
\end{equation}
with $C_{i_\text{p}}>0$ a positive constant %$\|i_\rp\|> 0$ 
depending only on $|\cO|$ and $\rp$ (see \cite[Theorem 1.3.4]{Quarteroni-Valli}).

%**********************************************************************
%**********************************************************************

\section{Formulation of the model problem}\label{sec:continuous-formulation}

In this section we introduce the model problem at the continuous level and we derive the corresponding weak formulation.
For simplicity of exposition we set the problem in $\R^2$. However, our study can be extended to the $3$D case with few modifications, which we will be pointed out appropriately in the paper.

\subsection{The model problem}%\label{sec:model-problem}

In order to describe the geometry, we let $\Omega_\rB$ and $\Omega_\rD$ be two bounded and simply connected polygonal domains in $\R^2$ such that $\partial\Omega_\rB\cap\partial\Omega_\rD = \Sigma \neq \emptyset$ and $\Omega_\rB\cap\Omega_\rD = \emptyset$. 
Then, let $\Gamma_\rB := \partial\Omega_\rB \setminus \ov{\Sigma}$, $\Gamma_\rD := \partial\Omega_\rD \setminus \ov{\Sigma}$, and denote by $\bn$ the unit normal vector on the boundaries, which is chosen pointing outward from $\Omega := \Omega_\rB\cup\Sigma\cup\Omega_\rD$ and $\Omega_\rB$ (and hence inward to $\Omega_\rD$ when seen on $\Sigma$). 
On $\Sigma$ we also consider a unit tangent vector $\bt$ (see Figure~\ref{fig:dominio-2d}). 
Then, given source terms $\f_\rB$, $\f_\rD$, and $g_\rD$, we are interested in the coupling of the Brinkman--Forchheimer and Darcy equations, which is formulated in terms of the velocity-pressure pair $(\bu_\star,p_\star)$ in $\Omega_\star$, with $\star\in \{\rB,\rD\}$.
More precisely, the sets of equations in the Brinkman--Forchheimer and Darcy domains $\Omega_\rB$ and $\Omega_\rD$, are, respectively,
\begin{equation}\label{eq:BF-model}
\begin{array}{c}
\bsi_\rB = - p_\rB\bbI + \mu\nabla\bu_\rB \qin \Omega_\rB, \quad
\bK^{-1}_\rB\bu_\rB + \tF\,|\bu_{\rB}|^{\rp-2}\bu_{\rB} - \bdiv(\bsi_\rB) = \f_\rB \qin \Omega_\rB, \\ [1ex]
\div(\bu_\rB) = 0 \qin \Omega_\rB, \quad \bu_\rB = \0 \qon \Gamma_\rB,
\end{array}
\end{equation}
and
\begin{equation}\label{eq:Darcy-model}
\bK^{-1}_\rD\bu_\rD + \nabla p_\rD = \f_\rD \qin \Omega_\rD, \quad
\div(\bu_\rD) = g_\rD \qin \Omega_\rD, \quad \bu_\rD\cdot\bn = 0 \qon \Gamma_\rD,
\end{equation}
where $\bsi_\rB$ is the Cauchy stress tensor, $\mu$ is the kinematic viscosity of the fluid,  
$\tF > 0$ is the Forchheimer coefficient, $\rp$ is a given number with $\rp\in [3,4]$, and 
$\bK_{\star}\in \bbL^\infty(\Omega_\star)$ are symmetric
tensors in $\Omega_\star$, with $\star\in \{\rB,\rD\}$, equal to the symmetric permeability tensors scaled by the kinematic viscosity. 
Throughout the paper we assume that there exists $C_\star > 0$ such that 
\begin{equation}\label{eq:permeability-constrain}
\bw\cdot\bK^{-1}_\star(\bx)\bw \geq C_\star |\bw|^2,
\end{equation}
for almost all $\bx\in\Omega_\star$, and for all $\bw\in\R^2$. 
\begin{figure}[H]
\centering\includegraphics[scale=1]{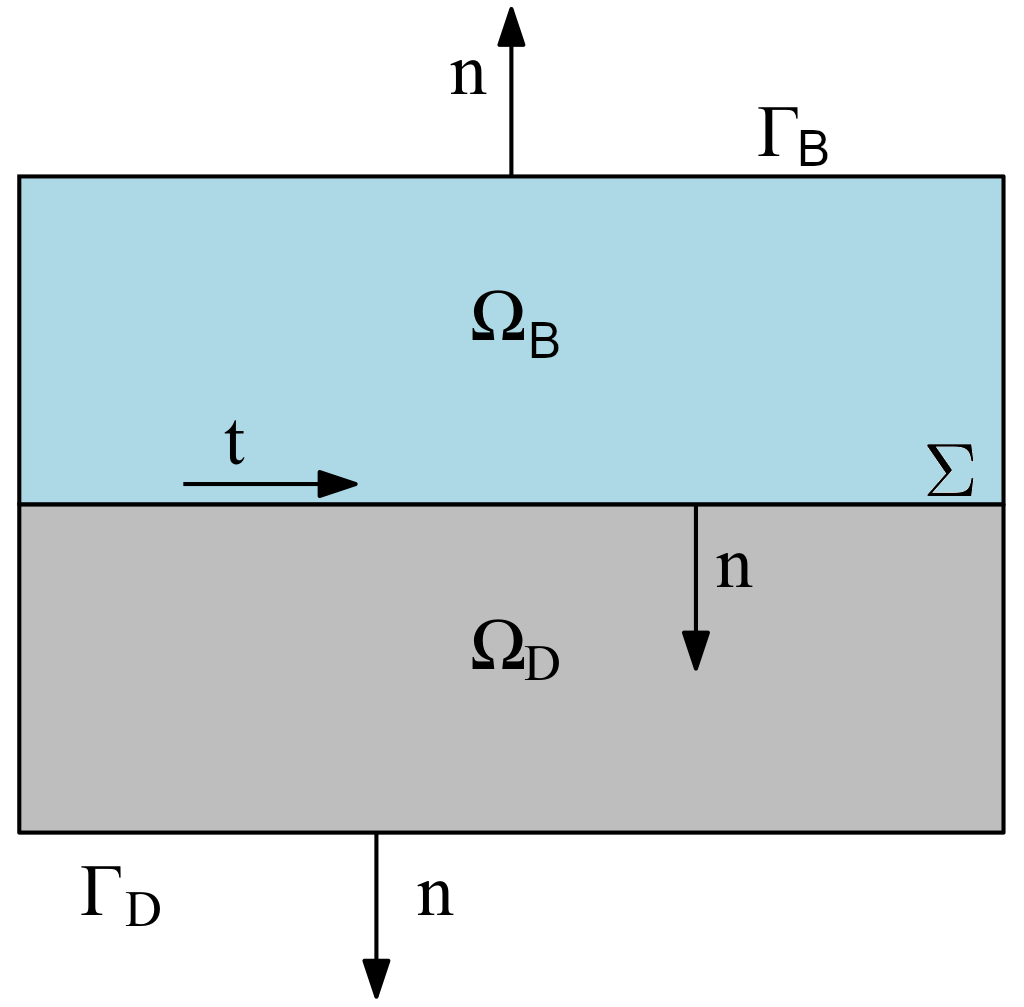}
\caption{Sketch of a 2D geometry of the coupled Brinkman--Forchheimer/Darcy model}
\label{fig:dominio-2d}
\end{figure}
In addition, according to the compressibility conditions, the boundary conditions on $\bu_\rD$ and $\bu_\rB$, and the principle of mass conservation (cf. \eqref{eq:transmission-condition} below), $g_\rD$ must satisfy the compatibility condition:
\begin{equation*}
\int_{\Omega_\rD} g_\rD = 0.
\end{equation*}

To couple the Brinkman--Forchheimer and the Darcy models, we propose transmission conditions that impose both the mass conservation and continuity of momentum across the interface $\Sigma$, following similar approaches in, e.g., \cite{Ehrhardt:2012:PICP,Dumitrache:2012:ATN,lcs2021}. More precisely, we consider
\begin{equation}\label{eq:transmission-condition}
\bu_\rB\cdot\bn = \bu_\rD\cdot\bn %\qon \Sigma 
\qan
\ds \bsi_\rB\bn = -p_\rD\bn \qon \Sigma \,.
\end{equation}

Other boundary conditions can be considered. For example, similarly to \cite{do2017}, one could impose
\begin{equation}\label{eq:alternative-BC}
\begin{array}{c}
\ds \bu_\rB = \0 \qon \Gamma^d_\rB\,,\quad \bsi_\rB\bn = \0 \qon \Gamma^n_\rB\,, \\[2ex]
\ds \bu_\rD\cdot\bn = 0 \qon \Gamma^d_\rD\,,\quad p_\rD = 0 \qon \Gamma^n_\rD\,,
\end{array}
\end{equation}
where $\Gamma^d_\rB\cup \Gamma^n_\rB = \Gamma_\rB, \Gamma^d_\rD\cup \Gamma^n_\rD=\Gamma_\rD$, 
and $\Gamma^d_\rB\cap \Sigma = \emptyset, \Gamma^n_\rD\cap \Sigma = \emptyset$.
The analysis studied in this work can be extended with minor modifications to the case when \eqref{eq:alternative-BC} are used.
However, for the sake of simplicity, we focus on \eqref{eq:BF-model}--\eqref{eq:Darcy-model} for the analysis,
and consider \eqref{eq:alternative-BC} in one of the numerical examples in Section \ref{sec:numerical-results}.

%**********************************************************************
%**********************************************************************

\subsection{Variational formulation}

In this section we proceed analogously to \cite[Section~2]{gmo2009} and derive a weak formulation of the coupled problem given by \eqref{eq:BF-model}, \eqref{eq:Darcy-model}, and \eqref{eq:transmission-condition}. 
Given $\star\in\{\rB,\rD\}$, let
\begin{equation*}
(p,q)_\star := \int_{\Omega_\star} p\,q,\quad 
(\bu,\bv)_\star := \int_{\Omega_\star} \bu\cdot\bv, \qan
(\bsi,\btau)_\star := \int_{\Omega_\star} \bsi:\btau,
\end{equation*}
where, given two arbitrary tensors $\bsi$ and $\btau$, $\bsi:\btau = \tr(\bsi^\rt\btau)=\ds\sum_{i,j=1}^2 \sigma_{ij}\tau_{ij}$.
Furthermore, we consider the Hilbert space
\begin{equation*}
\bH(\div;\Omega_\rD) := \Big\{\bv_\rD\in\bL^2(\Omega_\rD):\quad \div(\bv_\rD) \in\L^2(\Omega_\rD) \Big\},
\end{equation*}
endowed with the norm
\begin{equation*}
\|\bv_\rD\|_{\div;\Omega_\rD} := \Big( \|\bv_\rD\|^2_{0,\Omega_\rD} + \|\div(\bv_\rD)\|^2_{0,\Omega_\rD} \Big)^{1/2},
\end{equation*}
and the following subspaces of $\bH^1(\Omega_\rB)$ and $\bH(\div;\Omega_\rD)$, respectively,
\begin{align*}
\ds \bH^1_{\Gamma_\rB} (\Omega_\rB) & \,:=\, \Big\{\bv_\rB\in\bH^1(\Omega_\rB):\quad \bv_\rB = \0 \qon \Gamma_\rB \Big\}, \\[1ex]
\ds \bH_{\Gamma_\rD} (\div;\Omega_\rD) & \,:=\, \Big\{ \bv_\rD\in\bH(\div;\Omega_\rD):\quad \bv_\rD\cdot\bn = 0 \qon \Gamma_\rD \Big\} \,.
\end{align*}

We now proceed similarly to \cite{do2017,gmo2009} and test the second equation of \eqref{eq:BF-model}
by $\bv_\rB\in\bH^1_{\Gamma_\rB}(\Omega_\rB)$, integrate by parts and utilize the first and second equations of \eqref{eq:BF-model} and \eqref{eq:transmission-condition}, respectively, to obtain
\begin{equation}\label{eq:variational-1}
\mu(\nabla\bu_\rB,\nabla\bv_\rB)_\rB   
+ (\bK^{-1}_\rB\bu_\rB,\bv_\rB)_\rB	
+ \tF\,(|\bu_\rB|^{\rp-2}\bu_\rB,\bv_\rB)_\rB 
- (p_\rB,\div(\bv_\rB))_\rB 
+ \pil\bv_\rB\cdot \bn,\lambda\pir_\Sigma
= (\f_\rB,\bv_\rB)_\rB\,,
\end{equation}
for all $\bv_\rB\in \bH^1_{\Gamma_\rB}(\Omega_\rB)$, where $\lambda$ is a further
unknown representing the trace of the Darcy porous medium pressure on
$\Sigma$, that is $\lambda=p_\rD|_\Gamma \in \H^{1/2}(\Sigma)$. 
Note that, in principle, the space for $p_\rD$ does not allow enough regularity for the trace $\lambda$ to exist. 
However, remark that the solution of \eqref{eq:Darcy-model} has the pressure in $\H^{1}(\Omega_\rD)$.

Then, we incorporate the incompressibility condition in $\Omega_\rB$ weakly
as 
\begin{equation}\label{eq:variational-2}
(q_\rB,\div(\bu_\rB))_\rB=0 \quad \forall\,q_\rB\in \L^2(\Omega_\rB).
\end{equation}
Next, we multiply the first equation of
\eqref{eq:Darcy-model} by $\bv_\rD\in \bH_{\Gamma_\rD} (\div;\Omega_\rD)$ and integrate by parts to obtain
\begin{equation}\label{eq:variational-3}
(\bK^{-1}_\rD\bu_\rD,\bv_\rD)_\rD - (p_\rD,\div(\bv_\rD))_\rD
-\pil\bv_\rD\cdot\bn,\lambda\pir_\Sigma=(\mathbf{f}_\rD,\bv_\rD)_\rD,
\end{equation}
for all $\bv_\rD\in \bH_{\Gamma_\rD} (\div;\Omega_\rD)$.
Finally, we impose the second equation of \eqref{eq:Darcy-model} and
the first equation of \eqref{eq:transmission-condition} weakly as follows
\begin{equation}\label{eq:variational-4}
(q_\rD,\div(\bu_\rD))_\rD=(g_\rD, q_\rD)_\rD \qquad \forall\,q_\rD\in \L^2(\Omega_\rD),
\end{equation}
and
\begin{equation}\label{eq:variational-5}
\pil\bu_\rB\cdot\bn-\bu_\rD\cdot\bn,\xi\pir_\Sigma=0\qquad\forall\,\xi\in \H^{1/2}(\Sigma). 
\end{equation}
As a consequence of the above, we write $\Omega := \Omega_\rB \cup \Sigma \cup \Omega_\rD$,
and define $p:= p_\rB \chi_\rB + p_\rD \chi_\rD$, with $\chi_\star$ being the characteristic function:
\begin{equation*}
\chi_\star := \left\{\begin{array}{lll}
 1 & \mbox{ in } & \Omega_\star, \\ [1ex]
 0 & \mbox{ in } & \Omega\setminus \ov{\Omega}_\star,
\end{array}\right.
\quad\mbox{for }\, \star\in\{\rB,\rD\}\,,
\end{equation*}
to obtain the variational problem: Find $\bu_\rB \in \bH^1_{\Gamma_\rB} (\Omega_\rB),
\,p \in \L^2(\Omega),\,\bu_\rD \in \bH_{\Gamma_\rD} (\div;\Omega_\rD)$ and
$\lambda\in \H^{1/2}(\Sigma)$ such that \eqref{eq:variational-1}--\eqref{eq:variational-5} hold.

Now, let us observe that if $(\bu_\rB,\bu_\rD,p,\lambda)$ is a solution of the variational problem, then
for all $c\in \R$, $(\bu_\rB,\bu_\rD,p+c,\lambda+c)$ is also a solution.
Then, we avoid the non-uniqueness of  \eqref{eq:variational-1}--\eqref{eq:variational-5} by requiring
from now on that $p\in \L_0^2(\Omega)$, where
\begin{equation*}
\L^2_0(\Omega) := \Big\{q\in \L^2(\Omega):\quad \int_\Omega q =0 \Big\}.
\end{equation*}
In this way, we group the spaces and unknowns as follows:
\begin{equation*}
\begin{array}{c}
\bH := \bH^1_{\Gamma_\rB} (\Omega_\rB)\times \bH_{\Gamma_\rD} (\div;\Omega_\rD), \quad 
\bQ := \L^2_0(\Omega)\times \H^{1/2}(\Sigma), \\ [1ex]
\ds \bu := (\bu_\rB,\bu_\rD)  \in \bH, \quad (p,\lambda) \in \bQ,
\end{array}
\end{equation*}
and propose the mixed variational formulation: 
Find $(\bu,(p,\lambda)) \in \bH\times\bQ$, such that
\begin{equation}\label{eq:mixed-variational-formulation}
\begin{array}{llll}
[\ba(\bu),\bv] + [\bb(\bv),(p,\lambda)] & = & [\f,\bv] & \forall\,\bv:=(\bv_\rB,\bv_\rD) \in \bH, \\ [2ex]
[\bb(\bu),(q,\xi)] & = & [\g,(q,\xi)] & \forall\,(q,\xi) \in \bQ,
\end{array}
\end{equation}
where, the operator $\ba : \bH \to \bH'$ is defined by
\begin{equation}\label{eq:definition-a}
[\ba(\bu),\bv] 
\,:=\, \mu (\nabla\bu_\rB,\nabla\bv_\rB)_\rB 
+ (\bK^{-1}_\rB\bu_\rB,\bv_\rB)_\rB
+ \tF\,(|\bu_\rB|^{\rp-2}\bu_\rB,\bv_\rB)_\rB  
+ \left(\bK^{-1}_\rD \bu_\rD,\bv_\rD\right)_\rD\,, 
\end{equation}
whereas the operator $\bb:\bH\to\bQ'$ is given by
\begin{equation}\label{eq:definition-b}
[\bb(\bv),(q,\xi)] := - (q,\div(\bv_\rB))_\rB - (q,\div(\bv_\rD))_\rD + \pil \bv_\rB\cdot\bn - \bv_\rD\cdot\bn,\xi \pir_\Sigma.
\end{equation}
In turn, the functionals $\f\in \bH'$ and $\g\in \bQ'$ are defined by
\begin{equation}\label{eq:definition-rhs}
[\f,\bv] := (\f_\rB,\bv_\rB)_\rB + (\f_\rD,\bv_\rD)_\rD \qan [\g,(q,\xi)] := -(g_\rD,q)_\rD.
\end{equation}
In all the terms above, $[\,\cdot,\cdot\,]$ denotes the duality pairing induced by the corresponding operators.

%**********************************************************************
%**********************************************************************

\section{Analysis of the continuous coupled problem}\label{sec:analysis-continuous-problem}

In this section we establish the solvability of \eqref{eq:mixed-variational-formulation}.
We first collect some preliminaries results that will be used in the forthcoming analysis

\subsection{Preliminary results}

We begin by recalling the following abstract result \cite[Theorem 3.1]{cgo2021}, which in turn, is a modification of \cite[Theorem 3.1]{cdgo2020}.
\begin{thm}\label{thm:existence-approximation-nonlinear-problems}
Let $X_{1}$, $X_{2}$  and $Y$ be separable and reflexive Banach spaces, 
being $X_{1}$ and $X_{2}$ uniformly convex, and set $X:=X_{1}\times X_{2}$. 
Let $\cA:X\to X'$ be a nonlinear operator, $\mathcal{B}\in \mathcal{L}(X, Y')$, and let 
$V$ be the kernel of $\cB$, that is,
\begin{equation*}
V \,:=\, \Big\{ v=(v_1,v_2)\in X :\quad \cB (v) \,=\, \0 \Big\}\,.
\end{equation*}
Assume that
\begin{enumerate}
\item[\rm{(i)}] there exist constants $L > 0$ and $p_{1},p_{2}\geq 2$, such that
\begin{equation*}
\|\cA(u) - \cA(v)\|_{X'} 
\,\leq\, L\,\sum_{j=1}^{2}\Big\{ \|u_{j} - v_{j}\|_{X_{j}}
+ \big(\|u_{j}\|_{X_{j}} + \|v_{j}\|_{X_{j}}\big)^{p_{j}-2}\|u_{j} - v_{j}\|_{X_{j}} \Big\}
\end{equation*}
for all $u=(u_{1},u_{2}), v=(v_{1},v_{2})\in X$,
		
\item[\rm{(ii)}] 
the family of operators $\Big\{ \cA(\,\cdot+ z): V\to V':\quad
z\in X\Big\}$ is  uniformly  strongly  monotone, that is 
there exists $\alpha>0$ such that
\begin{equation*}
[\cA(u + z) - \cA(v + z),u - v] 
\,\geq\, \alpha\,\|u - v\|_{X}^{2}\,,
\end{equation*}
for all $z\in X$, and for all $u,v\in V$, and
		
\item[\rm{(iii)}] 
there exists $\beta>0$ such that
\begin{equation*}
\sup_{\substack{v\in X \\ v\neq 0}}\frac{[\cB(v),\tau]}{\|v\|_{X}}
\,\geq\, \beta\,\|\tau\|_{Y}\quad \forall\,\tau\in Y\,.
\end{equation*}
\end{enumerate}
Then, for each $(\mathcal{F},\mathcal{G})\in X'\times Y'$ 
there exists a unique $(u,\sigma)\in X\times Y$ such that
\begin{equation}\label{eq:teo-exists-unique-nolinear}
\begin{array}{rcll}
[\cA(u),v] + [\cB(v),\sigma] & = & [\mathcal{F},v] & \forall\,v\in X\,, \\ [1ex]
[\cB(u),\tau] & = & [\mathcal{G},\tau] & \forall\,\tau\in Y\,.
\end{array}
\end{equation}
Moreover, there exist positive constants $C_{1}$ and $C_{2}$, 
depending only on $L, \alpha$, and $\beta$, such that
\begin{equation}\label{eq:a-priori-u-thm-nonlinear-problems}
\|u\|_{X} 
\,\leq\, C_{1}\,\mathcal{M}(\mathcal{F},\mathcal{G})
\end{equation}
and
\begin{equation}\label{eq:a-priori-sigma-thm-nonlinear-problems}
\|\sigma\|_{Y} 
\,\leq\, C_{2}\,\bigg\{ \mathcal{M}(\mathcal{F},\mathcal{G}) +\sum^2_{j=1} \mathcal{M}(\mathcal{F},\mathcal{G})^{p_j-1} \bigg\} \,,
\end{equation}
where
\begin{equation}\label{def-M-F-G}
\mathcal{M}(\mathcal{F},\mathcal{G}) 
\,:=\, \|\mathcal{F}\|_{X'} + \|\mathcal{G}\|_{Y'} + \sum^2_{j=1} \|\mathcal{G}\|_{Y'}^{p_{j}-1} + \|\mathcal{A}(0)\|_{X'} \,.
\end{equation}
\end{thm}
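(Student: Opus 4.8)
The plan is to combine the Babuška--Brezzi saddle-point framework with the classical theory of strongly monotone operators, following the structure of \cite{cgo2021,cdgo2020}. The idea is to use the inf-sup condition (iii) to dispose of the constraint equation, thereby reducing \eqref{eq:teo-exists-unique-nolinear} to a single nonlinear equation posed on the kernel $V$, which is then solved by a nonlinear Lax--Milgram (Browder--Minty) argument; the multiplier $\sigma$ is recovered afterwards from the closed-range theorem.

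First I would exploit (iii). Since $X$ and $Y$ are reflexive, the inf-sup condition is equivalent to saying that $\cB:X\to Y'$ is surjective and that its adjoint $\cB^\ast:Y\to X'$ is injective with closed range equal to the polar set $V^\circ$ of $V$. In particular $\cB$ admits a bounded right inverse, so I fix $u_{\mathcal{G}}\in X$ with $\cB(u_{\mathcal{G}})=\mathcal{G}$ and $\|u_{\mathcal{G}}\|_{X}\leq\beta^{-1}\|\mathcal{G}\|_{Y'}$. Writing $u=u_{0}+u_{\mathcal{G}}$ with $u_{0}\in V$, the second line of \eqref{eq:teo-exists-unique-nolinear} holds automatically, while the first line, tested against $v\in V$ (so that $[\cB(v),\sigma]=0$), becomes the reduced problem: find $u_{0}\in V$ such that
\begin{equation*}
[\cA(u_{0}+u_{\mathcal{G}}),v]=[\mathcal{F},v]\qquad\forall\,v\in V.
\end{equation*}

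Next I would solve this reduced problem. Introduce the shifted operator $\cA_{u_{\mathcal{G}}}:=\cA(\,\cdot+u_{\mathcal{G}})\colon V\to V'$. By hypothesis (ii) --- applied precisely with the shift $z=u_{\mathcal{G}}\in X$, which in general does not lie in $V$ and is exactly why the \emph{family} version of strong monotonicity is needed --- this operator is strongly monotone on $V$ with constant $\alpha$, and by (i) it is Lipschitz continuous, hence bounded and hemicontinuous. Since $V$ is a closed subspace of the reflexive, uniformly convex space $X$, it is itself reflexive and inherits the geometric structure required by the abstract nonlinear solvability result, which then yields a unique $u_{0}\in V$. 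To recover the multiplier, observe that the functional $\mathcal{F}-\cA(u)\in X'$ annihilates $V$ by construction, hence belongs to $V^\circ=\cB^\ast(Y)$; injectivity of $\cB^\ast$ produces a unique $\sigma\in Y$ with $\cB^\ast\sigma=\mathcal{F}-\cA(u)$, which is precisely the first equation of \eqref{eq:teo-exists-unique-nolinear}, and $\|\sigma\|_{Y}\leq\beta^{-1}\|\mathcal{F}-\cA(u)\|_{X'}$.

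Finally, the a priori bounds: testing the reduced equation with $v=u_{0}$ and applying (ii) with $u=u_{0}$, $v=0$, $z=u_{\mathcal{G}}$ gives $\alpha\|u_{0}\|_{X}^{2}\leq[\mathcal{F}-\cA(u_{\mathcal{G}}),u_{0}]$, whence $\alpha\|u_{0}\|_{X}\leq\|\mathcal{F}\|_{X'}+\|\cA(u_{\mathcal{G}})\|_{X'}$; estimating $\|\cA(u_{\mathcal{G}})\|_{X'}$ through (i) relative to $\cA(0)$ and bounding $\|u_{\mathcal{G}}\|_{X}$ by $\beta^{-1}\|\mathcal{G}\|_{Y'}$ produces the powers $\|\mathcal{G}\|_{Y'}^{p_{j}-1}$ appearing in \eqref{def-M-F-G} and yields \eqref{eq:a-priori-u-thm-nonlinear-problems}. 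Inserting this estimate into $\|\cA(u)-\cA(0)\|_{X'}\lesssim\sum_{j}(\|u_{j}\|_{X_{j}}+\|u_{j}\|_{X_{j}}^{p_{j}-1})$ and then into the bound for $\sigma$ gives the mixed-power form \eqref{eq:a-priori-sigma-thm-nonlinear-problems}. I expect the main obstacle to be the reduced nonlinear solve: one must invoke the correct strongly-monotone-operator surjectivity theorem on $V$ and, above all, track the nonstandard polynomial growth of (i) carefully so that the exponents $p_{j}-1$ come out exactly as claimed in the constant $\mathcal{M}(\mathcal{F},\mathcal{G})$.
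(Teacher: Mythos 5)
Your proposal is correct and follows essentially the same route as the proof in the cited reference \cite{cgo2021} (the paper itself only recalls this theorem from there): reduction to the kernel $V$ via the inf-sup condition, a strongly monotone (Browder--Minty type) solve for the shifted operator $\cA(\cdot + u_{\mathcal{G}})$ on $V$, recovery of $\sigma$ from the closed-range/adjoint argument, and the a priori bounds obtained exactly as you describe, with the exponents $p_j-1$ arising from applying (i) against $v=0$. Nothing further is needed.
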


Next, we establish the stability properties of the operators and functionals 
involved in \eqref{eq:mixed-variational-formulation}. We begin by observing that 
the operator $\bb$ and functionals $\f$ and $\g$ are linear.
In turn, from \eqref{eq:definition-b} and \eqref{eq:definition-rhs}, and
employing the Cauchy--Schwarz inequality, there exists a positive constant $C_\bb$, such that
\begin{equation}\label{eq:continuity-b}
\big| [\bb(\bv),(q,\xi)] \big| \,\leq\, C_{\bb}\,\|\bv\|_{\bH}\,\|(q,\xi)\|_{\bQ}
\end{equation}
and
\begin{align}
\big|[\f,\bv]\big|
&\,\leq\, \Big\{ \|\f_{\rB}\|_{0,\Omega_{\rB}} + \|\f_{\rD}\|_{0,\Omega_{\rD}} \Big\} \|\bv\|_{\bH} \quad
\forall\,\bv\in \bH, \label{eq:stability-f} \\[2ex]
\big|[\g,(q,\xi)]\big|
&\,\leq\, \|g_\rD\|_{0,\Omega_{\rD}}\,\|(q,\xi)\|_{\bQ} \quad
\forall\, (q,\xi)\in \bQ\,, \label{eq:stability-g} 
\end{align}
which implies that $\bb, \f$ and $\g$ are bounded and continuous.
In addition, employing the Cauchy--Schwarz and H\"older inequalities,
and the continuous injection $\bi_\rp$ of $\bH^1(\Omega_\rB)$ into $\bL^{\rp}(\Omega_\rB)$, 
with $\rp\in [3,4]$ (cf. \eqref{eq:Sobolev-inequality}), it is readily seen that the nonlinear 
operator $\ba$ in \eqref{eq:definition-a} is bounded, that is
\begin{equation}\label{eq:a-bounded}
\big| [\ba(\bu),\bv] \big| \,\leq\, 
C_{\ba}\,\Big\{ \|\bu_{\rB}\|_{1,\Omega_\rB} + \|\bu_{\rB}\|^{\rp-1}_{1,\Omega_{\rB}} 
+\|\bu_\rD\|_{\div;\Omega_\rD} \Big\} \|\bv\|_{\bH}\,,
\end{equation}
with $C_{\ba}>0$ depending on $\mu, \tF, C_{\bi_\rp}, \bK_\rB$, and $\bK_\rD$.

Finally, we follow \cite{gos2012} to recall some preliminary results concerning boundary conditions and extension operators. 
Given $\bv_\rD\in\bH_{\Gamma_\rD}(\div;\Omega_\rD)$, the boundary condition
$\bv_\rD\cdot\bn = 0$ on $\Gamma_\rD$ means (see, e.g., \cite[Appendix A]{ejs2009} and \cite{gos2012,do2017})
\begin{equation*}
\pil\bv_\rD\cdot\bn, E_{0,\rD}(\xi) \pir_{\partial\Omega_\rD} \,=\, 0 \quad \forall\,\xi\in 
\H^{1/2}_{00}(\Gamma_\rD) \,,
\end{equation*}
where $\pil \cdot, \cdot\pir_{\partial\Omega_\rD}$ stands for the usual duality pairing between
$\H^{-1/2}(\partial\Omega_\rD)$ and $\H^{1/2}(\partial\Omega_\rD)$ with respect to the
$\L^2(\partial\Omega_\rD)$-inner product, $E_{0,\rD}: \H^{1/2}(\Gamma_\rD)\to \L^2(\partial\Omega_\rD)$ is the extension operator defined by
\begin{equation*}
E_{0,\rD}(\xi):=\left\{
\begin{array}{lll}
\xi&{\rm on}&\Gamma_\rD \\[0.5ex]
0&{\rm on}&\Sigma
\end{array}
\right.\quad\forall\,\xi\in \H^{1/2}(\Gamma_\rD) \,,
\end{equation*}
and
$\H^{1/2}_{00}(\Gamma_\rD) = \big\{ \xi\in \H^{1/2}(\Gamma_\rD) :\,\, E_{0,\rD}(\xi)\in \H^{1/2}(\partial\Omega_\rD) \big\}$,
%\begin{equation*}
%\H^{1/2}_{00}(\Gamma_\rD) \,=\, \Big\{ \xi\in \H^{1/2}(\Gamma_\rD) :\quad E_{0,\rD}(\xi)\in \H^{1/2}(\partial\Omega_\rD) \Big\}\,,
%\end{equation*}
endowed with the norm $\|\xi\|_{1/2,00;\Gamma_\rD} := \|E_{0,\rD}(\xi)\|_{1/2,\partial\Omega_\rD}$.

As a consequence, it is not difficult to prove (see, e.g., \cite[Section 2]{gs2007})
that the restriction of $\bv_\rD\cdot\bn$ to $\Sigma$ can be identified with an element of $\H^{-1/2}(\Sigma)$, namely
\begin{equation}\label{eq:vD-to-vD-ED}
\pil \bv_\rD\cdot\bn, \xi\pir_\Sigma \,:=\, \pil \bv_\rD\cdot\bn, E_\rD(\xi)\pir_{\partial\Omega_\rD} 
\quad \forall\,\xi\in \H^{1/2}(\Sigma)\,,
\end{equation}
where $E_\rD:\H^{1/2}(\Sigma)\to \H^{1/2}(\partial\Omega_\rD)$ is any bounded extension operator.
In particular, given $\xi\in \H^{1/2}(\Sigma)$, one could define $E_\rD(\xi) := z|_{\partial\Omega_\rD}$,
where $z\in \H^1(\Omega_\rD)$ is the unique solution of the boundary value problem: $\Delta z = 0$ in $\Omega_\rD$, $z=\xi$ on $\Sigma$, $\nabla z\cdot\bn = 0$ on $\Gamma_\rD$.
In addition, one can show (see \cite[Lemma 2.2]{gs2007}) that for all $\psi\in \H^{1/2}(\partial\Omega_\rD)$, there exist unique elements $\psi_\Sigma\in \H^{1/2}(\Sigma)$ and $\psi_{\Gamma_\rD}\in \H^{1/2}_{00}(\Gamma_\rD)$ such that
\begin{equation}\label{eq:psi-decomposition}
\psi \,=\, E_\rD(\psi_\Sigma) \,+\, E_{0,\rD}(\psi_{\Gamma_\rD}) \,,
\end{equation}
and there exist $C_1, C_2>0$, such that
\begin{equation*}
C_1 \Big\{ \|\psi_\Sigma\|_{1/2,\Sigma} + \|\psi_{\Gamma_\rD}\|_{1/2,00;\Gamma_\rD} \Big\} 
\,\leq\, \|\psi\|_{1/2,\partial\Omega_\rD} 
\,\leq\, C_2 \Big\{ \|\psi_\Sigma\|_{1/2,\Sigma} + \|\psi_{\Gamma_\rD}\|_{1/2,00;\Gamma_\rD} \Big\}\,.
\end{equation*}

\subsection{Existence and uniqueness of solution}

We begin by observing that the problem \eqref{eq:mixed-variational-formulation} has the same structure as \eqref{eq:teo-exists-unique-nolinear}.
Therefore, in order to apply Theorem \ref{thm:existence-approximation-nonlinear-problems}, we notice that, thanks to the uniform convexity and
separability of $\L^\rp(\Omega)$ for $\rp\in (1,+\infty)$, all the spaces involved in \eqref{eq:mixed-variational-formulation}, that is, $\bH^1_{\Gamma_\rB}(\Omega_\rB)$, $\bH_{\Gamma_\rD}(\div;\Omega_\rD)$, $\L^2_0(\Omega)$, and $\H^{1/2}(\Sigma)$, share the same properties, which implies that $\bH$ and $\bQ$ are uniformly convex and separable as well.

We continue our analysis by proving that the nonlinear operator $\ba$ satisfies hypothesis (i) of Theorem \ref{thm:existence-approximation-nonlinear-problems} with $\rp_1=\rp\in [3,4]$ and $\rp_2=2$.

\begin{lem}\label{lem:continuity-a}
Let $\rp\in [3,4]$. Then, there exists $L_{\tBFD}>0$, depending on $\tF, \bK_{\rD}, \bK_{\rB}$, and $C_{\bi_\rp}$, such that
\begin{equation}\label{eq:continuity-a}
\begin{array}{l}
\ds \|\ba(\bu) - \ba(\bv)\|_{\bH'} \\[2ex]
\ds\quad \leq\, L_{\tBFD}\,\Big\{ \|\bu_\rB - \bv_\rB\|_{1,\Omega} + \|\bu_\rD - \bv_\rD\|_{\div;\Omega} + \big(\|\bu_{\rB}\|_{1,\Omega} + \|\bv_{\rB}\|_{1,\Omega}\big)^{\rp-2} \|\bu_\rB - \bv_\rB\|_{1,\Omega} \Big\} \,,
\end{array}
\end{equation}
for all $\bu=(\bu_{\rB},\bu_{\rD}), \bv=(\bv_{\rB},\bv_{\rD})\in \bH$.
\end{lem}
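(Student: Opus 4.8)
The plan is to realise the dual norm $\|\ba(\bu)-\ba(\bv)\|_{\bH'}$ by testing against an arbitrary $\bw=(\bw_\rB,\bw_\rD)\in\bH$ and expanding $[\ba(\bu)-\ba(\bv),\bw]$ into the four contributions coming from \eqref{eq:definition-a}: the viscous term $\mu(\nabla(\bu_\rB-\bv_\rB),\nabla\bw_\rB)_\rB$, the two Darcy-type terms $(\bK^{-1}_\rB(\bu_\rB-\bv_\rB),\bw_\rB)_\rB$ and $(\bK^{-1}_\rD(\bu_\rD-\bv_\rD),\bw_\rD)_\rD$, and the Forchheimer term $\tF(|\bu_\rB|^{\rp-2}\bu_\rB-|\bv_\rB|^{\rp-2}\bv_\rB,\bw_\rB)_\rB$. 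The first three are linear in $\bu-\bv$ and are routine, whereas the Forchheimer term carries all the difficulty.

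For the three linear terms I would apply the Cauchy--Schwarz inequality together with the fact that $\mu$ is a positive constant and that $\bK^{-1}_\rB,\bK^{-1}_\rD$ are essentially bounded (since $\bK_\star\in\bbL^\infty(\Omega_\star)$). This controls their sum by a constant multiple of $\|\bu_\rB-\bv_\rB\|_{1,\Omega_\rB}\|\bw_\rB\|_{1,\Omega_\rB}+\|\bu_\rD-\bv_\rD\|_{\div;\Omega_\rD}\|\bw_\rD\|_{\div;\Omega_\rD}$, which, after dividing by $\|\bw\|_\bH\geq\max\{\|\bw_\rB\|_{1,\Omega_\rB},\|\bw_\rD\|_{\div;\Omega_\rD}\}$, produces exactly the first two summands on the right-hand side of \eqref{eq:continuity-a}.

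The core of the argument is the Forchheimer term, and here I would first establish the pointwise vector inequality
\[
\big|\,|\bz|^{\rp-2}\bz-|\by|^{\rp-2}\by\,\big|\,\leq\,(\rp-1)\,\big(|\bz|+|\by|\big)^{\rp-2}\,|\bz-\by|\qquad\forall\,\bz,\by\in\R^2,
\]
which I would obtain by writing $g(\bz)-g(\by)=\int_0^1\mathrm{D}g\big((1-t)\by+t\bz\big)(\bz-\by)\,\rd t$ for $g(\bz):=|\bz|^{\rp-2}\bz$, noting that the Jacobian $\mathrm{D}g(\bz)$ has spectral norm $(\rp-1)|\bz|^{\rp-2}$ and that $|(1-t)\by+t\bz|\leq|\bz|+|\by|$ for $t\in[0,1]$ (the hypothesis $\rp-2\geq1$ keeps the integrand well-behaved and monotone). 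Applying this with $\bz=\bu_\rB(\bx)$ and $\by=\bv_\rB(\bx)$, integrating over $\Omega_\rB$, and using the generalised H\"older inequality with the three conjugate exponents $\tfrac{\rp}{\rp-2},\rp,\rp$ (conjugate because $\tfrac{\rp-2}{\rp}+\tfrac1\rp+\tfrac1\rp=1$) on the factors $\big(|\bu_\rB|+|\bv_\rB|\big)^{\rp-2}$, $|\bu_\rB-\bv_\rB|$ and $|\bw_\rB|$, I reach a bound by $\tF\,(\rp-1)\,\big\|\,|\bu_\rB|+|\bv_\rB|\,\big\|_{0,\rp;\Omega_\rB}^{\rp-2}\,\|\bu_\rB-\bv_\rB\|_{0,\rp;\Omega_\rB}\,\|\bw_\rB\|_{0,\rp;\Omega_\rB}$.

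Finally I would invoke the Sobolev injection $\bi_\rp$ of \eqref{eq:Sobolev-inequality} --- legitimate since $\rp\in[3,4]$ --- to replace each $\bL^\rp(\Omega_\rB)$-norm by $C_{\bi_\rp}$ times the corresponding $\bH^1(\Omega_\rB)$-norm, use $\big\|\,|\bu_\rB|+|\bv_\rB|\,\big\|_{0,\rp;\Omega_\rB}\leq\|\bu_\rB\|_{0,\rp;\Omega_\rB}+\|\bv_\rB\|_{0,\rp;\Omega_\rB}$ to recover the sum structure, collect $\mu,\tF,C_{\bi_\rp}$ and the $\bL^\infty$-bounds of the permeabilities into a single constant $L_{\tBFD}$, and take the supremum over $\bw\neq\0$ after dividing by $\|\bw\|_\bH\geq\|\bw_\rB\|_{1,\Omega_\rB}$. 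I expect the main obstacle to be the bookkeeping in this nonlinear step: the choice of the exponent $\tfrac{\rp}{\rp-2}$ is precisely what forces the three H\"older factors to recombine into the product of $\bL^\rp$-norms compatible with the embedding, and it is this choice that delivers the power $\rp-2$ on $\big(\|\bu_\rB\|_{1,\Omega_\rB}+\|\bv_\rB\|_{1,\Omega_\rB}\big)$ required by the statement.
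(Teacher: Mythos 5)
Your proposal is correct and follows essentially the same route as the paper: bound the dual norm by treating the linear terms with Cauchy--Schwarz and the Forchheimer term with H\"older's inequality and the Sobolev injection $\bi_\rp$. The only difference is that where the paper cites \cite[Lemma~2.1, eq.~(2.1a)]{bl1993} to bound $\||\bu_\rB|^{\rp-2}\bu_\rB-|\bv_\rB|^{\rp-2}\bv_\rB\|_{0,\rq;\Omega_\rB}$, you derive the equivalent pointwise estimate from scratch via the mean value theorem applied to $\bz\mapsto|\bz|^{\rp-2}\bz$, which is a valid self-contained substitute.
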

\begin{proof}
Let $\bu=(\bu_{\rB},\bu_{\rD}), \bv=(\bv_{\rB},\bv_{\rD})\in \bH$.
Then, according to the definition of the operator $\ba$ (cf. \eqref{eq:definition-a}),
similarly to the boundedness estimate \eqref{eq:a-bounded}, using H\"older's and 
Cauchy--Schwarz inequalities, we find that
\begin{equation}\label{eq:continuity-1}
\begin{array}{l}
\ds \|\ba(\bu) - \ba(\bv)\|_{\bH'} 
\,\leq\, \tF\, C_{\bi_\rp} \,\||\bu_\rB|^{\rp-2}\bu_\rB - |\bv_\rB|^{\rp-2}\bv_\rB\|_{0,\rq;\Omega_\rB} \\[2ex] 
\ds\quad +\,\, \max\big\{ \mu,\|\bK^{-1}_\rB\|_{0,\infty;\Omega_\rB}, \|\bK^{-1}_\rD\|_{0,\infty;\Omega_\rD} \big\} 
\Big\{ \|\bu_\rB - \bv_\rB\|_{1,\Omega_\rB} + \|\bu_\rD - \bv_\rD\|_{\div;\Omega_\rD} \Big\} \,.
\end{array}
\end{equation}
In turn, applying \cite[Lemma~2.1, eq. (2.1a)]{bl1993} to bound the first term on 
the right hand side of \eqref{eq:continuity-1} and the continuous injection $\bi_\rp$ 
of $\bH^1(\Omega_\rB)$ into $\bL^{\rp}(\Omega_\rB)$, with $\rp\in [3,4]$ 
(cf. \eqref{eq:Sobolev-inequality}), we deduce that there exists $c_\rp > 0$, 
depending only on $|\Omega|$ and $\rp$ such that
\begin{equation}\label{eq:continuity-bound-nonlinear-term}
\begin{array}{c}
\ds \||\bu_\rB|^{\rp-2}\bu_\rB - |\bv_\rB|^{\rp-2}\bv_\rB\|_{0,\rq;\Omega_\rB}   
\,\leq\, c_\rp\,\big(\|\bu_\rB\|_{0,\rp;\Omega_\rB} + \|\bv_\rB\|_{0,\rp;\Omega_\rB}\big)^{\rp-2} \|\bu_\rB - \bv_\rB\|_{0,\rp;\Omega_\rB} \\[2ex]
\ds \,\leq\, c_\rp\, (C_{\bi_\rp})^{\rp-1}\big(\|\bu_\rB\|_{1,\Omega_\rB} + \|\bv_\rB\|_{1,\Omega_\rB}\big)^{\rp-2} \|\bu_\rB - \bv_\rB\|_{1,\Omega_\rB} \,.
\end{array}
\end{equation}
Thus, replacing \eqref{eq:continuity-bound-nonlinear-term} back into \eqref{eq:continuity-1},
we obtain \eqref{eq:continuity-a} with 
$$L_{\tBFD} = \max\big\{ \mu, \|\bK^{-1}_\rB\|_{0,\infty;\Omega_\rB}, \|\bK^{-1}_\rD\|_{0,\infty;\Omega_\rD}, \tF\,c_\rp\,(C_{\bi_\rp})^{\rp} \big\}\,,$$ 
which completes the proof.
\end{proof}

Now, let us look at the kernel of the operator $\bb$, that is
\begin{equation}\label{eq:kernel-V}
\bV \,:=\, \Big\{ \bv\in \bH :\quad [\bb(\bv),(q,\xi)] = 0 \quad \forall\,(q,\xi)\in \bQ \Big\}\,.
\end{equation}
According to the definition of $\bb$ (cf. \eqref{eq:definition-b}), we observe that 
$\bv=(\bv_\rB, \bv_\rD)\in \bV$ if and only if
\begin{equation*}
(\div(\bv_\rB),q)_\rB + (\div(\bv_\rD),q)_\rD \,=\, 0 \quad \forall\,q\in \L^2_0(\Omega)
\end{equation*}
and
\begin{equation*}
\pil \bv_\rB\cdot\bn - \bv_\rD\cdot\bn,\xi \pir_\Sigma \,=\, 0 \quad \forall\,\xi\in \H^{1/2}(\Sigma)\,.
\end{equation*}
In this way, noting that $\L^2(\Omega) = \L^2_0(\Omega) \oplus \R$, and taking $\xi \in \R$ in the latter equation, we deduce that
\begin{equation*}
(\div(\bv_\rB),q)_\rB + (\div(\bv_\rD),q)_\rD \,=\, 0 \quad \forall\,q\in \L^2(\Omega),
\end{equation*}
which implies
\begin{equation}\label{eq:div-uB-uD-0}
\div(\bv_\rB) = 0 \qin \Omega_\rB \qan \div(\bv_\rD) = 0 \qin \Omega_\rD.
\end{equation}

In the following result we show that the operator $\ba$ satisfies hypothesis (ii) of Theorem \ref{thm:existence-approximation-nonlinear-problems} with $\rp_1=\rp\in [3,4]$ and $\rp_2=2$.
\begin{lem}\label{lem:strong-monotonicity-a}
Let $\rp\in [3,4]$. The family of operators $\Big\{ \ba(\cdot + \bz) : \bV\to \bV' :\quad \bz\in \bH \Big\}$ is uniformly strongly monotone, that is, there exists $\gamma_{\tBFD}>0$, such that
\begin{equation}\label{eq:strict-monotonicity-a}
[\ba(\bu + \bz) - \ba(\bv + \bz),\bu - \bv] \,\geq\, \gamma_{\tBFD}\,\|\bu - \bv\|^2_{\bH} \,,
\end{equation}
for all $\bz=(\bz_{\rB},\bz_{\rD})\in \bH$, and for all $\bu=(\bu_{\rB},\bu_{\rD}), \bv=(\bv_{\rB},\bv_{\rD})\in \bV$.
\end{lem}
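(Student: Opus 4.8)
The plan is to expand the left-hand side of \eqref{eq:strict-monotonicity-a} using the definition of $\ba$ in \eqref{eq:definition-a}, handling its four constituent terms separately and writing $\bw := \bu - \bv = (\bw_\rB,\bw_\rD)$. I first observe that in the three \emph{linear} terms (the $\mu$-weighted gradient term and the two permeability terms involving $\bK^{-1}_\rB$ and $\bK^{-1}_\rD$) the shift $\bz$ cancels identically, since $(\bu_\star + \bz_\star) - (\bv_\star + \bz_\star) = \bw_\star$ for $\star\in\{\rB,\rD\}$. Thus these contributions reduce to $\mu\|\nabla\bw_\rB\|^2_{0,\Omega_\rB}$, $(\bK^{-1}_\rB\bw_\rB,\bw_\rB)_\rB$ and $(\bK^{-1}_\rD\bw_\rD,\bw_\rD)_\rD$, and the last two are bounded below by $C_\rB\|\bw_\rB\|^2_{0,\Omega_\rB}$ and $C_\rD\|\bw_\rD\|^2_{0,\Omega_\rD}$ respectively, thanks to the uniform positivity assumption \eqref{eq:permeability-constrain}.

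Next I treat the nonlinear Forchheimer term, namely $\tF\big(|\bu_\rB + \bz_\rB|^{\rp-2}(\bu_\rB + \bz_\rB) - |\bv_\rB + \bz_\rB|^{\rp-2}(\bv_\rB + \bz_\rB),\,\bw_\rB\big)_\rB$, whose two arguments differ exactly by $\bw_\rB$. By the classical monotonicity inequality for the $\rp$-power vector field (cf.\ \cite[Lemma~2.1]{bl1993}), valid for $\rp\geq 2$ and hence for $\rp\in[3,4]$, the integrand is pointwise nonnegative, so this term is $\geq 0$ and may simply be discarded. Importantly, this bound is independent of the shift $\bz$, which is precisely what delivers the uniformity in $\bz$ demanded by hypothesis (ii) of Theorem \ref{thm:existence-approximation-nonlinear-problems}.

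Collecting these estimates yields
\[
[\ba(\bu+\bz) - \ba(\bv+\bz),\bu-\bv] \,\geq\, \mu\|\nabla\bw_\rB\|^2_{0,\Omega_\rB} + C_\rB\|\bw_\rB\|^2_{0,\Omega_\rB} + C_\rD\|\bw_\rD\|^2_{0,\Omega_\rD}.
\]
The first two terms already control $\min\{\mu,C_\rB\}\,\|\bw_\rB\|^2_{1,\Omega_\rB}$. The main and characteristic point of the argument is recovering the \emph{full} $\bH(\div;\Omega_\rD)$-norm of $\bw_\rD$ from the bare $\L^2$-term $C_\rD\|\bw_\rD\|^2_{0,\Omega_\rD}$: here I use that $\bu,\bv\in\bV$, so by the kernel characterization \eqref{eq:div-uB-uD-0} both $\bu_\rD$ and $\bv_\rD$ are divergence-free, whence $\div(\bw_\rD)=0$ and $\|\bw_\rD\|_{\div;\Omega_\rD} = \|\bw_\rD\|_{0,\Omega_\rD}$.

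This is exactly why strong monotonicity is asserted on the kernel $\bV$ and not on all of $\bH$: the Darcy block supplies no coercivity in the divergence direction, and only the divergence-free restriction repairs this deficiency. With the above identity in hand, the right-hand side dominates $\gamma_{\tBFD}\,\|\bw\|^2_{\bH}$ with $\gamma_{\tBFD} := \min\{\mu,C_\rB,C_\rD\}$, which completes the proof. The only genuinely nontrivial ingredient is the monotonicity bound on the Forchheimer term; the rest is bookkeeping, the subtle part being the use of \eqref{eq:div-uB-uD-0} to upgrade the $\L^2$-control of $\bw_\rD$ to the $\div$-norm.
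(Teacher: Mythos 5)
Your proposal is correct and follows essentially the same route as the paper's proof: lower-bound the linear terms via \eqref{eq:permeability-constrain}, discard the Forchheimer term as nonnegative using \cite[Lemma~2.1]{bl1993} (which is what gives uniformity in $\bz$), and use $\div(\bu_\rD-\bv_\rD)=0$ from \eqref{eq:div-uB-uD-0} to upgrade the $\L^2$-control in $\Omega_\rD$ to the full $\bH(\div;\Omega_\rD)$-norm, arriving at the same constant $\gamma_{\tBFD}=\min\{\mu,C_{\bK_\rB},C_{\bK_\rD}\}$. No gaps.
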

\begin{proof}
Let $\bz=(\bz_{\rB},\bz_{\rD})\in \bH$, and $\bu=(\bu_{\rB},\bu_{\rD}), \bv=(\bv_{\rB},\bv_{\rD})\in \bV$. 
Then, from the definition of the operator $\ba$ (cf. \eqref{eq:definition-a}), 
the estimate \eqref{eq:permeability-constrain}, and the fact that 
$\div(\bu_\rD - \bv_\rD) = 0$ in $\Omega_\rD$ (cf. \eqref{eq:div-uB-uD-0}), we get
\begin{equation}\label{eq:strong-monotonicity-1}
\begin{array}{c}
\ds \big[\ba(\bu + \bz) - \ba(\bv + \bz),\bu - \bv\big] 
\,\geq\, \min\{ \mu, C_{\bK_\rB} \}\|\bu_\rB - \bv_\rB\|^2_{1,\Omega_\rB}
+ C_{\bK_\rD}\,\|\bu_\rB - \bv_\rB\|^2_{\div;\Omega_\rD} \\ [2ex]
\ds\quad  
+\,\, \tF\,(|\bu_\rB+\bz_\rB|^{\rp-2}(\bu_\rB+\bz_\rB) - |\bv_\rB+\bz_\rB|^{\rp-2}(\bv_\rB+\bz_\rB),\bu_\rB - \bv_\rB)_{\rB} \,.
\end{array}
\end{equation}
In turn, employing \cite[Lemma~2.1, eq. (2.1b)]{bl1993} to bound the last term in \eqref{eq:strong-monotonicity-1}, we deduce that there exists $C_{\rp}>0$ depending only on $|\Omega|$ and $\rp$ such that
\begin{equation*}
(|\bu_\rB+\bz_\rB|^{\rp-2}(\bu_\rB+\bz_\rB) - |\bv_\rB+\bz_\rB|^{\rp-2}(\bv_\rB+\bz_\rB),\bu_\rB - \bv_\rB)_{\rB}
\,\geq\, C_\rp\,\|\bu_\rB - \bv_\rB\|^\rp_{0,\rp;\Omega_\rB}
\,\geq\, 0 \,,
\end{equation*}
which, together with \eqref{eq:strong-monotonicity-1}, implies
\begin{equation}\label{eq:strong-monotonicity-2}
\big[\ba(\bu + \bz) - \ba(\bv + \bz),\bu - \bv\big] 
\,\geq\, \min\{ \mu, C_{\bK_\rB} \}\|\bu_\rB - \bv_\rB\|^2_{1,\Omega_\rB}
+ C_{\bK_\rD}\,\|\bu_\rB - \bv_\rB\|^2_{\div;\Omega_\rD} \,.
\end{equation}
Hence, it is clear that \eqref{eq:strong-monotonicity-2} yields \eqref{eq:strict-monotonicity-a}, 
with $\gamma_{\tBFD} = \min\big\{ \mu, C_{\bK_\rB}, C_{\bK_\rD} \big\}$, concluding the proof.
\end{proof}

We end the verification of the hypotheses of Theorem \ref{thm:existence-approximation-nonlinear-problems}, with the corresponding inf-sup condition for the operator $\bb$ (cf. \eqref{eq:definition-b}).
%whose proof can be found in \cite[Lemma 1]{do2017}.
The corresponding proof can be found in \cite[Lemma 1]{do2017}.
We just remark that the main tools employed are the extension property \eqref{eq:vD-to-vD-ED} 
and the decomposition \eqref{eq:psi-decomposition}.
Thus, we simply state the result as follows.
\begin{lem}\label{lem:inf-sup-operator-b}
There exists $\beta>0$ such that
\begin{equation}\label{eq:inf-sup-b}
\sup_{\0\neq \bv\in \bH} 
\frac{[b(\bv),(q,\xi)]}{\|\bv\|_{\bH}}
\,\geq\, \beta\,\|(q,\xi)\|_{\bQ} \quad
\forall\,(q,\xi)\in \bQ \,.
\end{equation}
\end{lem}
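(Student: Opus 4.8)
The plan is to exploit the block structure of $\bb$ in \eqref{eq:definition-b}: its action splits into a pressure part $-(q,\div(\bv_\rB))_\rB-(q,\div(\bv_\rD))_\rD$ tested against $q\in\L^2_0(\Omega)$, and an interface part $\pil\bv_\rB\cdot\bn-\bv_\rD\cdot\bn,\xi\pir_\Sigma$ tested against $\xi\in\H^{1/2}(\Sigma)$. For a given $(q,\xi)\in\bQ$ I would construct two test functions that control each component separately and whose cross-contributions can be absorbed, and then combine them.

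For the pressure, recall that since $\Omega=\Omega_\rB\cup\Sigma\cup\Omega_\rD$ is a bounded connected Lipschitz domain, the divergence operator $\div:\bH^1_0(\Omega)\to\L^2_0(\Omega)$ admits a bounded right inverse; hence there is $\bw\in\bH^1_0(\Omega)$ with $\div(\bw)=-q$ and $\|\bw\|_{1,\Omega}\le C\|q\|_{0,\Omega}$. Setting $\bv^{(1)}:=(\bw|_{\Omega_\rB},\bw|_{\Omega_\rD})$, the homogeneous boundary values of $\bw$ give $\bv^{(1)}\in\bH$, while the continuity of $\bw$ across $\Sigma$ forces the jump term to vanish, so that $[\bb(\bv^{(1)}),(q,\xi)]=\|q\|^2_{0,\Omega}$ with $\|\bv^{(1)}\|_{\bH}\le C\|q\|_{0,\Omega}$.

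For the multiplier, given $\xi\in\H^{1/2}(\Sigma)$ I would introduce the auxiliary mixed boundary value problem
\[
-\Delta z + z = 0 \qin \Omega_\rD, \quad z=\xi \qon \Sigma, \quad \nabla z\cdot\bn = 0 \qon \Gamma_\rD,
\]
which is well posed and satisfies $\|z\|_{1,\Omega_\rD}\le C\|\xi\|_{1/2,\Sigma}$. The field $\bv^{(2)}:=(\0,-\nabla z)$ then lies in $\bH$: indeed $\div(\nabla z)=\Delta z=z\in\L^2(\Omega_\rD)$ and $\nabla z\cdot\bn=0$ on $\Gamma_\rD$, which is exactly the condition defining $\bH_{\Gamma_\rD}(\div;\Omega_\rD)$ and is the situation encoded by \eqref{eq:vD-to-vD-ED}, whereby the normal trace of $\bv^{(2)}_\rD$ restricts to an element of $\H^{-1/2}(\Sigma)$. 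Using \eqref{eq:vD-to-vD-ED} together with Green's identity (the $\Gamma_\rD$ term dropping thanks to $\nabla z\cdot\bn=0$), the interface contribution equals $\pil-\bv^{(2)}_\rD\cdot\bn,\xi\pir_\Sigma=\pil\nabla z\cdot\bn,z\pir_\Sigma=\|z\|^2_{1,\Omega_\rD}$, whereas the pressure contribution is $-(q,\div(\bv^{(2)}_\rD))_\rD=(q,z)_\rD$; hence $[\bb(\bv^{(2)}),(q,\xi)]=(q,z)_\rD+\|z\|^2_{1,\Omega_\rD}$. The trace inequality $\|\xi\|_{1/2,\Sigma}\le C\|z\|_{1,\Omega_\rD}$ then bounds the interface term from below by $\|\xi\|^2_{1/2,\Sigma}$, while $\|\bv^{(2)}\|_{\bH}\le C\|\xi\|_{1/2,\Sigma}$.

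Finally I would test with $\bv:=\bv^{(1)}+\eta\,\bv^{(2)}$ for a small parameter $\eta>0$, which gives $[\bb(\bv),(q,\xi)]\ge \|q\|^2_{0,\Omega}+\eta(q,z)_\rD+\eta\,C\|\xi\|^2_{1/2,\Sigma}$; the cross term $\eta(q,z)_\rD$ is handled by Young's inequality and absorbed into the two positive terms once $\eta$ is fixed small enough, yielding $[\bb(\bv),(q,\xi)]\gtrsim \|q\|^2_{0,\Omega}+\|\xi\|^2_{1/2,\Sigma}=\|(q,\xi)\|^2_{\bQ}$. Since $\|\bv\|_{\bH}\le C\|(q,\xi)\|_{\bQ}$, dividing yields \eqref{eq:inf-sup-b}. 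I expect the construction of $\bv^{(2)}$ to be the delicate step: one must produce a Darcy test field that both lies in $\bH_{\Gamma_\rD}(\div;\Omega_\rD)$ (normal trace vanishing on $\Gamma_\rD$) and whose normal trace on $\Sigma$ reproduces the full $\H^{1/2}(\Sigma)$-duality with $\xi$; this is precisely where the extension property \eqref{eq:vD-to-vD-ED} and the decomposition \eqref{eq:psi-decomposition} are needed to make the interface pairing well defined and to furnish the required norm equivalence.
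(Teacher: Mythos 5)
Your construction is correct and follows essentially the route the paper relies on: the paper defers the proof to \cite[Lemma 1]{do2017} and states that the key tools are the extension identity \eqref{eq:vD-to-vD-ED} and the decomposition \eqref{eq:psi-decomposition}, which are exactly what you use to make the pairing $\pil\nabla z\cdot\bn,\xi\pir_\Sigma$ well defined and equal to $\|z\|^2_{1,\Omega_\rD}$. The only cosmetic difference is that you assemble a single test function $\bv^{(1)}+\eta\,\bv^{(2)}$ with a small parameter instead of combining two partial inf-sup estimates, which is an equivalent bookkeeping choice.
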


Now, we are in a position of establishing the well-posedness of problem \eqref{eq:mixed-variational-formulation}.
\begin{thm}\label{thm:well-posedness-result}
Let $\rp\in [3,4]$.
Then, the problem \eqref{eq:mixed-variational-formulation} has a unique solution $(\bu,(p,\lambda))\in \bH\times \bQ$.
Moreover, there exist constants $C_1, C_2>0$, independent of the solution, such that
\begin{equation}\label{eq:a-priori-bound-u}
\|\bu\|_{\bH} \,\leq\, C_1\,\Big( \|\f_\rB\|_{0,\Omega_\rB} + \|\f_\rD\|_{0,\Omega_\rD} 
+ \|g_\rD\|_{0,\Omega_\rD} + \|g_\rD\|^{\rp-1}_{0,\Omega_\rD} \Big) 
\end{equation}
and
\begin{equation}\label{eq:a-priori-bound-p-lam}
\|(p,\lambda)\|_{\bQ} \,\leq\, C_2\,\sum_{j\in \{\rp,2\} } \Big( \|\f_\rB\|_{0,\Omega_\rB} + \|\f_\rD\|_{0,\Omega_\rD} 
+ \|g_\rD\|_{0,\Omega_\rD} + \|g_\rD\|^{\rp-1}_{0,\Omega_\rD} \Big)^{j-1}  \,.
\end{equation}
\end{thm}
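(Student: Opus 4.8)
The plan is to recognize that the mixed formulation \eqref{eq:mixed-variational-formulation} is an instance of the abstract problem \eqref{eq:teo-exists-unique-nolinear}, with the identifications $\cA = \ba$, $\cB = \bb$, $X = \bH$, $Y = \bQ$, $\mathcal{F} = \f$, and $\mathcal{G} = \g$, and then to invoke Theorem~\ref{thm:existence-approximation-nonlinear-problems} directly. Accordingly, the first thing I would record is that the hypotheses on the underlying spaces hold: as already observed, $\bH^1_{\Gamma_\rB}(\Omega_\rB)$, $\bH_{\Gamma_\rD}(\div;\Omega_\rD)$, $\L^2_0(\Omega)$, and $\H^{1/2}(\Sigma)$ are separable, reflexive, and uniformly convex, hence so are the product spaces $\bH$ and $\bQ$; moreover $\bb \in \mathcal{L}(\bH,\bQ')$ by the continuity bound \eqref{eq:continuity-b}, and the kernel of $\bb$ is precisely the space $\bV$ from \eqref{eq:kernel-V}.

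Next I would verify the three structural hypotheses of Theorem~\ref{thm:existence-approximation-nonlinear-problems} with the exponent choice $p_1 = \rp \in [3,4]$ and $p_2 = 2$. Hypothesis (i) (the Lipschitz-type continuity of $\ba$) is exactly Lemma~\ref{lem:continuity-a}; hypothesis (ii) (uniform strong monotonicity of the shifted family $\ba(\cdot + \bz)$ on $\bV$) is exactly Lemma~\ref{lem:strong-monotonicity-a}; and hypothesis (iii) (the inf-sup condition for $\bb$) is exactly Lemma~\ref{lem:inf-sup-operator-b}. With all three in hand, Theorem~\ref{thm:existence-approximation-nonlinear-problems} immediately yields a unique $(\bu,(p,\lambda)) \in \bH \times \bQ$ solving \eqref{eq:mixed-variational-formulation}, which settles existence and uniqueness.

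It then remains only to specialize the abstract a priori estimates \eqref{eq:a-priori-u-thm-nonlinear-problems}--\eqref{eq:a-priori-sigma-thm-nonlinear-problems} to the present data. The key computation is the evaluation of the quantity $\mathcal{M}(\f,\g)$ from \eqref{def-M-F-G}. For this I would use the stability bounds \eqref{eq:stability-f} and \eqref{eq:stability-g}, namely $\|\f\|_{\bH'} \leq \|\f_\rB\|_{0,\Omega_\rB} + \|\f_\rD\|_{0,\Omega_\rD}$ and $\|\g\|_{\bQ'} \leq \|g_\rD\|_{0,\Omega_\rD}$, together with the observation that $\ba(\0) = \0$ (every term in \eqref{eq:definition-a} vanishes when $\bu = \0$), so that $\|\ba(\0)\|_{\bH'} = 0$. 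With $p_1 = \rp$ and $p_2 = 2$ the sum $\sum_{j=1}^2 \|\g\|_{\bQ'}^{p_j - 1}$ contributes $\|g_\rD\|_{0,\Omega_\rD}^{\rp-1} + \|g_\rD\|_{0,\Omega_\rD}$, and after absorbing repeated terms into the constant one obtains
\begin{equation*}
\mathcal{M}(\f,\g) \,\leq\, C\,\Big( \|\f_\rB\|_{0,\Omega_\rB} + \|\f_\rD\|_{0,\Omega_\rD} + \|g_\rD\|_{0,\Omega_\rD} + \|g_\rD\|^{\rp-1}_{0,\Omega_\rD} \Big)\,.
\end{equation*}
Substituting this into \eqref{eq:a-priori-u-thm-nonlinear-problems} gives \eqref{eq:a-priori-bound-u} directly, while substituting into \eqref{eq:a-priori-sigma-thm-nonlinear-problems} and noting that the exponents $p_j - 1$ range over $\{\rp-1, 1\}$, i.e.\ $j \in \{\rp, 2\}$, produces \eqref{eq:a-priori-bound-p-lam}.

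I do not expect a genuine obstacle here, since the substance of the argument has been front-loaded into Lemmas~\ref{lem:continuity-a}, \ref{lem:strong-monotonicity-a}, and \ref{lem:inf-sup-operator-b}; the only point requiring slight care is the bookkeeping of the nonlinear exponents in $\mathcal{M}(\f,\g)$ and its powers, ensuring that the polynomial dependence on $\|g_\rD\|_{0,\Omega_\rD}$ collapses to exactly the terms displayed in \eqref{eq:a-priori-bound-u}--\eqref{eq:a-priori-bound-p-lam}.
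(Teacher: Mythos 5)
Your proposal follows exactly the paper's own argument: both verify that the spaces are uniformly convex and separable, invoke Lemmas~\ref{lem:continuity-a}, \ref{lem:strong-monotonicity-a}, and \ref{lem:inf-sup-operator-b} to satisfy the hypotheses of Theorem~\ref{thm:existence-approximation-nonlinear-problems} with $p_1=\rp$ and $p_2=2$, note that $\ba(\0)$ is the null functional so that $\cM(\f,\g)=\|\f\|_{\bH'}+2\|\g\|_{\bQ'}+\|\g\|_{\bQ'}^{\rp-1}$, and then insert the bounds \eqref{eq:stability-f}--\eqref{eq:stability-g} to obtain \eqref{eq:a-priori-bound-u} and \eqref{eq:a-priori-bound-p-lam}. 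The proposal is correct and takes essentially the same route as the paper.
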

\begin{proof}
First, we recall that, from \eqref{eq:continuity-b}, \eqref{eq:stability-f}, and \eqref{eq:stability-g}, $\bb$, $\f$, and $\g$ are all linear and bounded.
Thus, bearing in mind Lemmas~\ref{lem:continuity-a} and \ref{lem:strong-monotonicity-a}, 
and the inf-sup condition of $\bb$ given by \eqref{eq:inf-sup-b} (cf. Lemma \ref{lem:inf-sup-operator-b}), 
a straightforward application of Theorem \ref{thm:existence-approximation-nonlinear-problems}, with $\rp_1=\rp\in [3,4]$ and $\rp_2=2$, to problem \eqref{eq:mixed-variational-formulation} completes the proof.
In particular, noting from \eqref{eq:definition-a} that $\ba(\0)$ is the null functional, we
get from \eqref{def-M-F-G} that 
\begin{equation*}
\cM(\f,\g) = \|\f\|_{\bH'} + 2\,\|\g\|_{\bQ'} + \|\g\|^{\rp-1}_{\bQ'} \,,
\end{equation*}
and hence the {\it a priori} estimates \eqref{eq:a-priori-u-thm-nonlinear-problems} and \eqref{eq:a-priori-sigma-thm-nonlinear-problems} yield
\begin{equation*}
\|\bu\|_{\bH}  \,\leq\, c_1\Big( \|\f\|_{\bH'} + \|\g\|_{\bQ'} + \|\g\|^{\rp-1}_{\bQ'} \Big)
\end{equation*}
and
\begin{equation*}
\|(p,\lambda)\|_{\bQ} \,\leq\, c_2\,\sum_{j\in \{\rp,2\} } \Big( \|\f\|_{\bH'} 
+ \|\g\|_{\bQ'} + \|\g\|^{\rp-1}_{\bQ'} \Big)^{j-1}  \,,
\end{equation*}
with positive constants $c_1, c_2$ depending only on $L_{\tBFD}, \gamma_{\tBFD}$, and $\beta$.
The foregoing inequalities together with the bounds \eqref{eq:stability-f} and \eqref{eq:stability-g} of $\|\f\|_{\bH'}$ and $\|\g\|_{\bQ'}$ imply \eqref{eq:a-priori-bound-u} and \eqref{eq:a-priori-bound-p-lam}, thus completing the proof.
\end{proof}

%**********************************************************************
%**********************************************************************

\section{The Galerkin scheme}\label{sec:galerkin-approximation}

In this section we introduce and analyze the Galerkin scheme of 
problem \eqref{eq:mixed-variational-formulation}.
We analyze its solvability by employing the strategy developed in 
Section \ref{sec:analysis-continuous-problem}.
Finally, we derive the error estimates and obtain the corresponding rates of convergence.

\subsection{Discrete coupled problem}\label{sec:discrete-setting}

Let $\cT^\rB_h$ and $\cT^\rD_h$ be respective triangulations of the domains $\Omega_\rB$ 
and $\Omega_\rD$ formed by shape-regular triangles, denote by $h_\rB$ 
and $h_\rD$ their corresponding mesh sizes, and let $h : = \max\big\{ h_\rB, h_\rD \big\}$. 
Assume that $\cT^\rB_h$ and $\cT^\rD_h$ match on $\Sigma$ so that $\cT_h := \cT^\rB_h \cup \cT^\rD_h$ is a triangulation of 
$\Omega := \Omega_\rB \cup \Sigma \cup \Omega_\rD$. 
Then, given an integer $l\geq 0$ and a subset $S$ of $\R^2$, we denote by $\rP_l(S)$ the space of polynomials of total degree at most $l$ defined on $S$.
For each $T\in \cT^\rD_h$ we consider the local Raviart--Thomas space of the lowest order \cite{Raviart-Thomas}:
\begin{equation*}
%\bRT_0(T) := \mathrm{span}\Big\{ (1,0), (0,1), (x_1,x_2) \Big\} \,.
\bRT_0(T) := [\rP_0(T)]^2 \oplus \rP_0(T)\,\bx \,,
\end{equation*}
where $\bx:=(x_1,x_2)^\rt$ is a generic vector of $\R^2$.
In addition, for each $T\in \cT^\rB_h$ we denote by $\bBR(T)$ the local Bernardi--Raugel space \cite{br1985}:
\begin{equation*}
\bBR(T) := [\rP_1(T)]^2 \oplus \mathrm{span}\Big\{ \eta_2\eta_3\bn_1,\eta_1\eta_3\bn_2,\eta_1\eta_2\bn_3 \Big\},
\end{equation*}
where $\big\{ \eta_1, \eta_2, \eta_3\big\}$ are the baricentric coordinates of $T$, and $\big\{\bn_1, \bn_2, \bn_3\big\}$ are the unit outward normals to the opposite sides of the corresponding vertices of $T$. Hence, we define the following finite element subspaces:
\begin{align*}
\bH_h (\Omega_\rB)  & \,:= \, \Big\{ \bv\in\bH^1(\Omega_\rB) : \quad \bv|_T \in \bBR(T),\quad \forall\, T\in\cT^\rB_h \Big\} \,, \\
\bH_h(\Omega_\rD)  & \,:=\, \Big\{ \bv\in \bH(\div;\Omega_\rD) : \quad \bv|_T \in \bRT_0(T),\quad \forall\, T\in\cT^\rD_h \Big\} \,, \\
\L_h(\Omega) & \,:=\, \Big\{ q\in \L^2(\Omega) :\quad q|_T\in \rP_0(T), \quad \forall\, T\in \cT_h \Big\} \,.
\end{align*}
Then, the finite element subspaces for the velocities and pressure are, respectively,
\begin{align}
\bH_{h,\Gamma_\rB}(\Omega_\rB) & \,:=\, \bH_h(\Omega_\rB)\cap \bH^1_{\Gamma_\rB}(\Omega_\rB) \,,  \nonumber \\[1ex] 
\bH_{h,\Gamma_\rD}(\Omega_\rD) & \,:=\, \bH_h(\Omega_\rD)\cap \bH_{\Gamma_\rD}(\div;\Omega_\rD) \,, \label{eq:FEM-1} \\[1ex]
\L_{h,0}(\Omega) & \,:=\, \L_h(\Omega)\cap \L^2_0(\Omega) \,. \nonumber
\end{align}

Next, to introduce the finite element subspace of $\H^{1/2}(\Sigma)$, 
we denote by $\Sigma_h$ the partition of $\Sigma$ inherited from $\cT^\rD_h$ (or $\cT^\rB_h$)
and assume without loss of generality, that the number of edges of $\Sigma_h$ is even.
Then, since $\Sigma_{h}$ is inherited from the interior triangulations, it is 
automatically of bounded variation, i.e., the ratio of lengths of adjacent edges is bounded, and so is $\Sigma_{2h}$.
If the number of edges of $\Sigma_h$ is odd, we simply reduce it to the even case by joining
any pair of two adjacent elements, and then construct $\Sigma_{2h}$ from this reduced partition.
Then, we define the following finite element subspace for $\lambda\in \H^{1/2}(\Sigma)$
\begin{equation}\label{eq:FEM-2}
\Lambda_h(\Sigma) := \Big\{ \xi_h\in \cC(\Sigma) :\quad \xi_h|_e\in \rP_1(e) \quad \forall\,e\in \Sigma_{2h} \Big\} \,.
\end{equation}

In this way, grouping the unknowns and spaces as follows:
\begin{equation*}
\begin{array}{c}
\bH_h := \bH_{h,\Gamma_\rB}(\Omega_\rB)\times \bH_{h,\Gamma_\rD} (\Omega_\rD), \quad \bQ_h := \L_{h,0}(\Omega)\times \Lambda_h(\Sigma), \\ [1ex]
\ds \bu_h := (\bu_{\rB,h}, \bu_{\rD,h})  \in \bH_h, \quad (p_h, \lambda_h) \in \bQ_h,
\end{array}
\end{equation*}
where $p_h:= p_{\rB,h} \chi_\rB + p_{\rD,h} \chi_\rD$, the Galerkin scheme for \eqref{eq:mixed-variational-formulation} reads:  
Find $(\bu_h,(p_h,\lambda_h)) \in \bH_h\times\bQ_h$, such that
\begin{equation}\label{eq:discrete-mixed-formulation}
\begin{array}{llll}
[\ba(\bu_h),\bv_h] + [\bb(\bv_h),(p_h,\lambda_h)] & = & [\f,\bv_h] & \forall\,\bv_h:=(\bv_{\rB,h},\bv_{\rD,h}) \in \bH_h \,, \\ [2ex]
[\bb(\bu_h),(q_h,\xi_h)] & = & [\g,(q_h,\xi_h)] & \forall\,(q_h,\xi_h) \in \bQ_h\,.
\end{array}
\end{equation}

%**********************************************************************
%**********************************************************************

Now, let $\Pi_\rB : \bH^1_{\Gamma_\rB}(\Omega_\rB)\to \bH_{h,\Gamma_\rB}(\Omega_\rB)$ 
be the Bernardi--Raugel interpolation operator \cite{br1985}, which is linear and bounded 
with respect to the $\bH^1(\Omega_\rB)$-norm. 
In this regard, we recall that, given $\bv\in\bH^1_{\Gamma_\rB}(\Omega_\rB)$, there holds
\begin{equation}\label{eq:PiB-edge}
\int_e \Pi_\rB(\bv)\cdot\bn \,=\, \int_e \bv\cdot\bn \quad \mbox{for each edge $e$ of } \cT^\rB_h, 
\end{equation}
and hence
\begin{equation}\label{eq:PiB-div}
(\div(\Pi_\rB(\bv)),q_h)_\rB \,=\, (\div(\bv),q_h)_\rB \quad \forall\,q_h\in\L_h(\Omega).
\end{equation}
Equivalently, if $\cP_\rB$ denotes the $\L^2(\Omega_\rB)$-orthogonal projection onto the restriction 
of $\L_h(\Omega)$ to $\Omega_\rB$, then the relation \eqref{eq:PiB-div} can be written as
\begin{equation}\label{eq:PB-div}
\cP_\rB(\div(\Pi_\rB(\bv))) \,=\, \cP_\rB(\div(\bv)) \quad \forall\,\bv\in\bH^1_{\Gamma_\rB}(\Omega_\rB) \,.
\end{equation}

On the other hand, let $\Pi_\rD:\bH^1(\Omega_\rD)\to \bH_h(\Omega_\rD)$ be the well-known Raviart--Thomas interpolation operator \cite{Raviart-Thomas}.
We recall that, given $\bv\in\bH^1(\Omega_\rD)$, this operator is characterized by
\begin{equation}\label{eq:PiD-edge}
\int_e \Pi_\rD(\bv)\cdot\bn \,=\, \int_e \bv\cdot\bn \quad \mbox{ for each edge $e$ of } \cT^\rD_h,
\end{equation}
which implies that
\begin{equation}\label{eq:q-div-PiD}
(\div(\Pi_\rD(\bv)),q_h)_\rD \,=\, (\div(\bv),q_h)_\rD \quad \forall\,q_h\in\L_h(\Omega).
\end{equation}
Equivalently, if $\cP_\rD$ denotes the $\L^2(\Omega_\rD)$-orthogonal projection onto the restriction 
of $\L_h(\Omega)$ to $\Omega_\rD$, then the relation \eqref{eq:q-div-PiD} can be written as
\begin{equation}\label{eq:PD-div}
\div(\Pi_\rD(\bv)) \,=\, \cP_\rD(\div(\bv)) \quad \forall\,\bv\in\bH^1(\Omega_\rD) \,.
\end{equation}

Let us now observe that the set of discrete normal traces on $\Sigma$ of $\bH_h(\Omega_\rD)$ is given by
\begin{equation}\label{eq:Phih-definition}
\Phi_h(\Sigma) \,:=\, \Big\{ \phi_h : \Sigma\to \R : \quad \phi_h|_e\in \rP_0(e) \quad \forall\,\mbox{ edge } e\in \Sigma_h \Big\} \,.
\end{equation}
In \cite[Theorem A.1]{mms2015} it has been proved that there exists a discrete lifting
\begin{equation}\label{eq:Lifting-Lh}
\bL_h: \Phi_h(\Sigma) \to \bH_{h,\Gamma_\rD}(\Omega_\rD) \,,
\end{equation}
such that, for all $\phi_h\in \Phi_h(\Sigma)$,
\begin{equation}\label{eq:Lh-properties}
\|\bL_h(\phi_h)\|_{\div;\Omega_\rD} \,\leq\, C_{\Sigma}\,\|\phi_h\|_{-1/2,\Sigma} \qan
\bL_h(\phi_h)\cdot\bn \,=\, \phi_h \qon \Sigma\,.
\end{equation}
In addition, in \cite[Lemma 5.2]{gos2011} it has been proved that there exits $\wh{\beta}_\Sigma>0$, independent of $h$, such that the pair of subspaces $(\Phi_h(\Sigma),\Lambda_h(\Sigma))$ satisfies the discrete inf-sup condition:
\begin{equation}\label{eq:Lifting-inf-sup-condition}
\sup_{0\neq \phi_h\in \Phi_h(\Sigma)}\frac{\pil\phi_h, \xi_h\pir_\Sigma}{\|\phi_h\|_{-1/2,\Sigma}}
\,\geq\, \wh{\beta}_\Sigma \,\|\xi_h\|_{1/2,\Sigma} \quad \forall\,\xi_h\in \Lambda_h(\Sigma) \,.
\end{equation}

\subsection{Well-posedness of the discrete problem}

Now we prove the well-posedness of problem \eqref{eq:discrete-mixed-formulation}
by employing analogous arguments to the ones developed in Theorem \ref{thm:well-posedness-result}.
We begin by establishing the continuity and strong monotonicity of the operator $\ba$ 
on the discrete kernel of $\bb$:
\begin{equation*}
\bV_h \,:=\, \Big\{ \bv_h :=(\bv_{\rB,h},\bv_{\rD,h})\in \bH_h :\quad [\bb(\bv_h),(q_h,\xi_h)] = 0\quad \forall\,(q_h,\xi_h)\in \bQ_h \Big\} \,.
\end{equation*}

Observe that, similarly to the continuous case, $\bv_h\in \bV_h$ if and only if
\begin{equation*}
(q_h, \div(\bv_{\rB,h}))_\rB + (q_h,\div(\bv_{\rD,h}))_\rD \,=\, 0 \quad \forall\,q_h\in \L_{h,0}(\Omega)\,,
\end{equation*}
and
\begin{equation*}
\pil \bv_{\rB,h}\cdot\bn - \bv_{\rD,h}\cdot\bn,\xi_h \pir_\Sigma \,=\, 0 \quad \forall\,\xi_h\in \Lambda_h(\Sigma) \,,
\end{equation*}
which, in particular imply that
\begin{equation}\label{eq:Vh-condition}
(q_h, \div(\bv_{\rB,h}))_\rB \,=\, 0 \quad \forall\,q_h\in \L_h(\Omega_\rB) \qan
\div(\bv_{\rD,h}) \,=\, 0 \qin \Omega_\rD\,,
\end{equation}
where $\L_h(\Omega_\rB)$ is the set of functions of $\L_h(\Omega)$ restricted to $\Omega_{\rB}$.
In this way, using \eqref{eq:Vh-condition}, we address the discrete counterparts of 
Lemmas \ref{lem:continuity-a} and \ref{lem:strong-monotonicity-a},
whose proofs, being almost verbatim of the continuous ones, are omitted.
\begin{lem}\label{lem:strong-monotonicity-continuity-a-Hh}
Let $\rp\in [3,4]$. Then, the family of operators 
$\Big\{ \ba(\cdot + \bz_h) : \bV_h\to \bV'_h :\quad \bz_h\in \bH_h \Big\}$ 
is uniformly strongly monotone with the same constant $\gamma_{\tBFD}>0$
from \eqref{eq:strict-monotonicity-a}, that is, there holds
\begin{equation*}%\label{eq:strict-monotonicity-a-Hh}
[\ba(\bu_h + \bz_h) - \ba(\bv_h + \bz_h),\bu_h - \bv_h] 
\,\geq\, \gamma_{\tBFD}\,\|\bu_h - \bv_h\|^2_{\bH} \,,
\end{equation*}
for each $\bz_h=(\bz_{\rB,h},\bz_{\rD,h})\in \bH_h$, and for all $\bu_h=(\bu_{\rB,h},\bu_{\rD,h}), \bv_h=(\bv_{\rB,h},\bv_{\rD,h})\in \bV_h$.
In addition, the operator $\ba:\bH_h\to \bH'_h$ is continuous in the sense of \eqref{eq:continuity-a},
with the same constant $L_{\tBFD}$.
\end{lem}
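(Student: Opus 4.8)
The plan is to exploit the conformity of the discretisation, namely $\bH_h\subset\bH$ and $\bQ_h\subset\bQ$, so that both assertions follow almost immediately from their continuous counterparts, Lemmas~\ref{lem:continuity-a} and \ref{lem:strong-monotonicity-a}. The continuity estimate transfers by pure restriction, whereas for the monotonicity the only genuinely new ingredient is the discrete kernel characterisation \eqref{eq:Vh-condition}, which takes over the role played by \eqref{eq:div-uB-uD-0} in the continuous argument. Accordingly, I expect to introduce no new estimate and only to check carefully that each tool used in the continuous proofs survives the passage to the finite element subspaces.

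For the continuity in the sense of \eqref{eq:continuity-a}, the argument is almost immediate: since $\bu_h,\bv_h\in\bH_h\subset\bH$, Lemma~\ref{lem:continuity-a} applies directly to the pair $(\bu_h,\bv_h)$ and bounds $\|\ba(\bu_h)-\ba(\bv_h)\|_{\bH'}$ by the right-hand side of \eqref{eq:continuity-a} with constant $L_{\tBFD}$. Because $\bH_h$ is a (closed, finite-dimensional) subspace of $\bH$, the restriction to $\bH_h$ of the functional $\ba(\bu_h)-\ba(\bv_h)$ has dual norm no larger than its dual norm over $\bH$, i.e. $\|\ba(\bu_h)-\ba(\bv_h)\|_{\bH'_h}\le\|\ba(\bu_h)-\ba(\bv_h)\|_{\bH'}$. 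Combining the two inequalities yields the discrete continuity estimate with the inherited constant $L_{\tBFD}$.

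For the uniform strong monotonicity, I would fix $\bz_h\in\bH_h$ and $\bu_h,\bv_h\in\bV_h$ and replay the proof of Lemma~\ref{lem:strong-monotonicity-a} term by term. The terms $\mu(\nabla\cdot,\nabla\cdot)_\rB$ and $(\bK^{-1}_\rB\cdot,\cdot)_\rB$ of the Brinkman block, evaluated at $\bu_{\rB,h}-\bv_{\rB,h}$, control $\|\bu_{\rB,h}-\bv_{\rB,h}\|_{1,\Omega_\rB}^2$ from below by $\min\{\mu,C_{\bK_\rB}\}$ through \eqref{eq:permeability-constrain}; I stress that this uses the full $\bH^1$-norm and needs no divergence constraint on $\bu_{\rB,h}-\bv_{\rB,h}$. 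The Darcy term $(\bK^{-1}_\rD\cdot,\cdot)_\rD$ gives, again via \eqref{eq:permeability-constrain}, control of $C_{\bK_\rD}\|\bu_{\rD,h}-\bv_{\rD,h}\|_{0,\Omega_\rD}^2$, and to upgrade this to the full $\div$-norm I invoke $\div(\bu_{\rD,h}-\bv_{\rD,h})=0$ in $\Omega_\rD$, which is exactly the second identity in \eqref{eq:Vh-condition}. The Forchheimer contribution is nonnegative by \cite[Lemma~2.1, eq. (2.1b)]{bl1993}, just as in the continuous case. Summing these bounds reproduces \eqref{eq:strict-monotonicity-a} with the same $\gamma_{\tBFD}=\min\{\mu,C_{\bK_\rB},C_{\bK_\rD}\}$.

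The one point deserving care, and the main (if mild) obstacle, is that $\bV_h$ does \emph{not} enforce $\div(\bv_{\rB,h})=0$ pointwise but only its $\L_h(\Omega_\rB)$-orthogonality (the first identity in \eqref{eq:Vh-condition}), whereas the continuous kernel satisfies $\div(\bv_\rB)=0$ exactly (cf. \eqref{eq:div-uB-uD-0}). I would therefore verify explicitly that the continuous monotonicity argument never uses the pointwise vanishing of $\div(\bv_\rB)$: the Brinkman coercivity rests only on the $\bH^1$-ellipticity coming from $\mu(\nabla\cdot,\nabla\cdot)_\rB$ together with the zeroth-order term $(\bK^{-1}_\rB\cdot,\cdot)_\rB$, so the weaker discrete condition is harmless there. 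Exact divergence annihilation is needed only on the Darcy block, and there it does hold, since $\div(\bRT_0(T))\subseteq\rP_0(T)$ forces the weak orthogonality in \eqref{eq:Vh-condition} to become the pointwise identity $\div(\bv_{\rD,h})=0$. Once this is observed, both estimates hold verbatim, with the constants $L_{\tBFD}$ and $\gamma_{\tBFD}$ inherited from the continuous analysis.
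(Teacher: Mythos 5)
Your proposal is correct and follows exactly the route the paper intends: the authors omit the proof, stating it is ``almost verbatim'' of the continuous Lemmas~\ref{lem:continuity-a} and~\ref{lem:strong-monotonicity-a}, and your argument is precisely that transfer, including the one genuinely necessary observation that the continuous monotonicity proof only uses divergence-freeness on the Darcy block, where the second identity in \eqref{eq:Vh-condition} does give $\div(\bv_{\rD,h})=0$ pointwise.
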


We continue with the discrete inf-sup condition of $\bb$.
To that end, we first recall from \cite{do2017}
the inf-sup conditions
\begin{equation}\label{eq:inf-sup-b-aux-1}
\sup_{\0\neq \bv_h\in \bH_h} 
\frac{[\bb(\bv_h),(q_h,\xi_h)]}{\|\bv_h\|_{\bH}}
\,\geq\, \wt{C}_1\,\|\xi_h\|_{1/2,\Sigma} - \|q_h\|_{0,\Omega} \,,
\end{equation}
and
\begin{equation}\label{eq:inf-sup-b-aux-2}
\sup_{\0\neq \bv_h\in \bH_h} 
\frac{[\bb(\bv_h),(q_h,\xi_h)]}{\|\bv_h\|_{\bH}}
\,\geq\, \wt{C}_2\,\|q_h\|_{0,\Omega} - \wt{C}_3\,h^{1/2}_\rD\,\|\xi_h\|_{1/2,\Sigma} \,,
\end{equation}
for all $(q_h,\xi_h)\in \bQ_h$, where $\wt{C}_1$ and $\wt{C}_2, \wt{C}_3$ are positive
constants described in \cite[Lemmas 7 and 8]{do2017}, respectively, and whose proofs follow from 
the use of the lifting $\bL_h$ defined in \eqref{eq:Lifting-Lh}, 
properties \eqref{eq:Lh-properties} and \eqref{eq:Lifting-inf-sup-condition},
and the Bernardi--Raugel and Raviart--Thomas interpolations properties
\eqref{eq:PiB-edge}--\eqref{eq:PB-div} and \eqref{eq:PiD-edge}--\eqref{eq:PD-div}, respectively.
According to the above, after a suitable combination of the inf-sup conditions \eqref{eq:inf-sup-b-aux-1} and \eqref{eq:inf-sup-b-aux-2}, the following result holds (see \cite[Lemma 9]{do2017}).
\begin{lem}\label{lem:inf-sup-operator-b-Qh}
Assume that
\begin{equation}\label{eq:hD-condition}
h_\rD \,\leq\ \left( \frac{\wt{C}_1\,\wt{C}_2}{2\,\wt{C}_3} \right)^2 \,.
\end{equation}	
Then, there exists $\wh{\beta} > 0$, independent of $h$, such that
\begin{equation}\label{eq:inf-sup-b-Qh}
\sup_{\0\neq \bv_h\in \bH_h} 
\frac{[\bb(\bv_h),(q_h,\xi_h)]}{\|\bv_h\|_{\bH}}
\,\geq\, \wh{\beta}\,\|(q_h,\xi_h)\|_{\bQ} \quad
\forall\,(q_h,\xi_h)\in \bQ_h \,.
\end{equation}
\end{lem}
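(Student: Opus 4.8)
The goal is to combine the two auxiliary inf-sup conditions \eqref{eq:inf-sup-b-aux-1} and \eqref{eq:inf-sup-b-aux-2} into a single, clean inf-sup bound for $\bb$ that controls the full norm $\|(q_h,\xi_h)\|_{\bQ} = \big(\|q_h\|^2_{0,\Omega} + \|\xi_h\|^2_{1/2,\Sigma}\big)^{1/2}$. The plan is to take a convex combination of the two estimates, exploiting that the supremum on the left-hand side is the same quantity in both. The hypothesis \eqref{eq:hD-condition} on $h_\rD$ is exactly what is needed to make the cross-term $h_\rD^{1/2}\|\xi_h\|_{1/2,\Sigma}$ negligible against the positive $\|\xi_h\|_{1/2,\Sigma}$ contribution coming from \eqref{eq:inf-sup-b-aux-1}.

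First I would denote the common left-hand supremum by
\begin{equation*}
S \,:=\, \sup_{\0\neq \bv_h\in \bH_h} \frac{[\bb(\bv_h),(q_h,\xi_h)]}{\|\bv_h\|_{\bH}} \,,
\end{equation*}
so that \eqref{eq:inf-sup-b-aux-1} reads $S \geq \wt{C}_1\|\xi_h\|_{1/2,\Sigma} - \|q_h\|_{0,\Omega}$ and \eqref{eq:inf-sup-b-aux-2} reads $S \geq \wt{C}_2\|q_h\|_{0,\Omega} - \wt{C}_3 h_\rD^{1/2}\|\xi_h\|_{1/2,\Sigma}$. The idea is to form $\theta_1 S + \theta_2 S = (\theta_1+\theta_2) S$ for suitable weights $\theta_1,\theta_2>0$ chosen to cancel the unwanted negative $-\|q_h\|_{0,\Omega}$ term. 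Taking $\theta_1 = \wt{C}_2$ (the weight on \eqref{eq:inf-sup-b-aux-1}) and $\theta_2 = 1$ (the weight on \eqref{eq:inf-sup-b-aux-2}), the $\|q_h\|_{0,\Omega}$ contributions combine as $-\wt{C}_2\|q_h\|_{0,\Omega} + \wt{C}_2\|q_h\|_{0,\Omega} = 0$, leaving
\begin{equation*}
(\wt{C}_2 + 1)\,S \,\geq\, \big( \wt{C}_1\wt{C}_2 - \wt{C}_3 h_\rD^{1/2} \big)\,\|\xi_h\|_{1/2,\Sigma} \,.
\end{equation*}

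At this stage the mesh condition \eqref{eq:hD-condition} enters decisively: it guarantees $\wt{C}_3 h_\rD^{1/2} \leq \tfrac{1}{2}\wt{C}_1\wt{C}_2$, so the coefficient of $\|\xi_h\|_{1/2,\Sigma}$ is at least $\tfrac{1}{2}\wt{C}_1\wt{C}_2 > 0$, yielding a control $S \geq c\,\|\xi_h\|_{1/2,\Sigma}$ with an $h$-independent constant. This is the step I expect to be the crux of the argument, since it is precisely where the smallness assumption on $h_\rD$ is consumed to convert a sign-indefinite lower bound into a genuine positive one. Having secured $\|\xi_h\|_{1/2,\Sigma} \lesssim S$, I would then feed this back into \eqref{eq:inf-sup-b-aux-2} to bound $\wt{C}_2\|q_h\|_{0,\Omega} \leq S + \wt{C}_3 h_\rD^{1/2}\|\xi_h\|_{1/2,\Sigma} \lesssim S$, thereby controlling $\|q_h\|_{0,\Omega}$ by $S$ as well (here one may simply bound $h_\rD^{1/2}$ by a fixed constant via \eqref{eq:hD-condition}).

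Finally, combining the two component bounds $\|\xi_h\|_{1/2,\Sigma} \lesssim S$ and $\|q_h\|_{0,\Omega} \lesssim S$ and using that $\|(q_h,\xi_h)\|_{\bQ} \leq \|q_h\|_{0,\Omega} + \|\xi_h\|_{1/2,\Sigma}$ gives $\|(q_h,\xi_h)\|_{\bQ} \lesssim S$, which is exactly \eqref{eq:inf-sup-b-Qh} with $\wh{\beta}$ equal to the reciprocal of the accumulated constant, depending only on $\wt{C}_1,\wt{C}_2,\wt{C}_3$ and hence independent of $h$. The only care needed throughout is to track the constants explicitly and to confirm that none of them degenerate as $h\to 0$, which holds because $\wt{C}_1,\wt{C}_2,\wt{C}_3$ are themselves $h$-independent by \cite{do2017} and the mesh assumption \eqref{eq:hD-condition} caps $h_\rD^{1/2}$ uniformly.
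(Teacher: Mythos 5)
Your proposal is correct and follows essentially the same route as the paper, which does not spell out the details but explicitly describes the proof as ``a suitable combination of the inf-sup conditions \eqref{eq:inf-sup-b-aux-1} and \eqref{eq:inf-sup-b-aux-2}'' and defers to \cite[Lemma 9]{do2017}; your weighted combination with weight $\wt{C}_2$ on \eqref{eq:inf-sup-b-aux-1} cancels the $\|q_h\|_{0,\Omega}$ terms exactly, and the mesh condition \eqref{eq:hD-condition} gives $\wt{C}_3 h_\rD^{1/2}\leq \tfrac12\wt{C}_1\wt{C}_2$ as needed. The subsequent recovery of $\|q_h\|_{0,\Omega}\lesssim S$ from \eqref{eq:inf-sup-b-aux-2} and the final assembly of the constants, all independent of $h$, are sound.
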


We are now in position to establish the main result of this section,
namely, existence and uniqueness of solution of problem \eqref{eq:discrete-mixed-formulation}.
\begin{thm}%\label{thm:well-posedness-discrete-result}
Let $\rp\in [3,4]$.
Assume that \eqref{eq:hD-condition} holds.
Then, the problem \eqref{eq:discrete-mixed-formulation} has a unique 
solution $(\bu_h,(p_h,\lambda_h))\in \bH_h\times \bQ_h$.
Moreover, there exist constants $\wh{C}_1, \wh{C}_2>0$, independent of $h$ and of the solution, such that
\begin{equation}\label{eq:a-priori-bound-uh}
\|\bu_h\|_{\bH} \,\leq\, \wh{C}_1\,\Big( \|\f_\rB\|_{0,\Omega_\rB} + \|\f_\rD\|_{0,\Omega_\rD} 
+ \|g_\rD\|_{0,\Omega_\rD} + \|g_\rD\|^{\rp-1}_{0,\Omega_\rD} \Big) 
\end{equation}
and
\begin{equation}\label{eq:a-priori-bound-ph-lamh}
\|(p_h,\lambda_h)\|_{\bQ} \,\leq\, \wh{C}_2\,\sum_{j\in \{\rp,2\} } \Big( \|\f_\rB\|_{0,\Omega_\rB} + \|\f_\rD\|_{0,\Omega_\rD} 
+ \|g_\rD\|_{0,\Omega_\rD} + \|g_\rD\|^{\rp-1}_{0,\Omega_\rD} \Big)^{j-1}  \,.
\end{equation}
\end{thm}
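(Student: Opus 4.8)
The plan is to mimic almost verbatim the proof of Theorem \ref{thm:well-posedness-result} at the discrete level, since problem \eqref{eq:discrete-mixed-formulation} has exactly the same saddle-point structure as \eqref{eq:teo-exists-unique-nolinear}, with the roles of $X$, $Y$, $\cA$, and $\cB$ now played by $\bH_h$, $\bQ_h$, $\ba$, and $\bb$, respectively. First I would note that $\bH_h$ and $\bQ_h$, being finite-dimensional subspaces of the uniformly convex and separable spaces $\bH$ and $\bQ$, automatically inherit uniform convexity and separability, so the functional-analytic setting required by Theorem \ref{thm:existence-approximation-nonlinear-problems} is in place.

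Next I would verify the three hypotheses of the abstract theorem with $\rp_1 = \rp \in [3,4]$ and $\rp_2 = 2$. Hypotheses (i) and (ii) follow directly from Lemma \ref{lem:strong-monotonicity-continuity-a-Hh}: the Lipschitz-type continuity of $\ba$ on $\bH_h$ holds with the same constant $L_{\tBFD}$, and the family $\{ \ba(\cdot + \bz_h) \}$ is uniformly strongly monotone on the discrete kernel $\bV_h$ with the same constant $\gamma_{\tBFD}$. Hypothesis (iii), the discrete inf-sup condition for $\bb$, is precisely Lemma \ref{lem:inf-sup-operator-b-Qh}, with constant $\wh{\beta}$; this is the only place where the mesh assumption \eqref{eq:hD-condition} of the present theorem is actually invoked.

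With all hypotheses verified, a direct application of Theorem \ref{thm:existence-approximation-nonlinear-problems} yields the existence and uniqueness of $(\bu_h,(p_h,\lambda_h)) \in \bH_h \times \bQ_h$, together with the abstract a priori bounds \eqref{eq:a-priori-u-thm-nonlinear-problems} and \eqref{eq:a-priori-sigma-thm-nonlinear-problems}. As in the continuous case, I would observe from \eqref{eq:definition-a} that $\ba(\0)$ is the null functional, so that $\cM(\f,\g) = \|\f\|_{\bH'} + 2\,\|\g\|_{\bQ'} + \|\g\|^{\rp-1}_{\bQ'}$. The final bounds \eqref{eq:a-priori-bound-uh} and \eqref{eq:a-priori-bound-ph-lamh} then follow by inserting the continuity estimates \eqref{eq:stability-f} and \eqref{eq:stability-g} for $\|\f\|_{\bH'}$ and $\|\g\|_{\bQ'}$.

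The only genuine subtlety, rather than a true obstacle, is ensuring the $h$-independence of the constants $\wh{C}_1$ and $\wh{C}_2$. This is guaranteed because the three structural constants $L_{\tBFD}$, $\gamma_{\tBFD}$, and $\wh{\beta}$ supplied by Lemmas \ref{lem:strong-monotonicity-continuity-a-Hh} and \ref{lem:inf-sup-operator-b-Qh} are all independent of $h$ (the last one under \eqref{eq:hD-condition}), so the constants $C_1$, $C_2$ produced by Theorem \ref{thm:existence-approximation-nonlinear-problems}, which depend only on $L$, $\alpha$, and $\beta$, inherit that independence.
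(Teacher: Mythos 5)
Your proposal is correct and follows essentially the same route as the paper: invoke Lemma \ref{lem:strong-monotonicity-continuity-a-Hh} for hypotheses (i)--(ii), Lemma \ref{lem:inf-sup-operator-b-Qh} for (iii), apply Theorem \ref{thm:existence-approximation-nonlinear-problems} with $\rp_1=\rp$ and $\rp_2=2$, and obtain the \emph{a priori} bounds from \eqref{eq:a-priori-u-thm-nonlinear-problems}--\eqref{eq:a-priori-sigma-thm-nonlinear-problems} together with \eqref{eq:stability-f}--\eqref{eq:stability-g}. Your additional remarks on the inherited uniform convexity of the discrete subspaces and on the $h$-independence of the constants are consistent with, and slightly more explicit than, the paper's argument.
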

\begin{proof}
According to Lemma \ref{lem:strong-monotonicity-continuity-a-Hh} and 
the discrete inf-sup condition for $\bb$ provided by \eqref{eq:inf-sup-b-Qh} 
(cf. Lemma \ref{lem:inf-sup-operator-b-Qh}),
the proof follows from a direct application of Theorem \ref{thm:existence-approximation-nonlinear-problems}, with $\rp_1=\rp\in [3,4]$ and $\rp_2=2$,
to the discrete setting represented by \eqref{eq:discrete-mixed-formulation}.
In particular, the {\it a priori} bounds \eqref{eq:a-priori-bound-uh} and 
\eqref{eq:a-priori-bound-ph-lamh} are consequence of the abstract estimates 
\eqref{eq:a-priori-u-thm-nonlinear-problems} and \eqref{eq:a-priori-sigma-thm-nonlinear-problems}, 
respectively, applied to \eqref{eq:discrete-mixed-formulation}, which makes use of 
the bounds for $\f$ and $\g$ given by \eqref{eq:stability-f} and \eqref{eq:stability-g}, 
respectively, thus completing the proof.
\end{proof}

\bigskip

We end this section by observing that the existence of a stable lifting $\bL_h$ satisfying \eqref{eq:Lh-properties} 
and the inf-sup condition \eqref{eq:Lifting-inf-sup-condition} play an important role in 
the proof of the discrete inf-sup condition \eqref{eq:inf-sup-b-aux-1}. In particular, 
as established in Section \ref{sec:discrete-setting}, the existence of a stable lifting $\bL_h$,
satisfying \eqref{eq:Lh-properties}, has been proved in \cite[Theorem A.1]{mms2015} (see also \cite{gos2011} for a similar result) 
for the 2D case, where the only restriction on the grid is shape regularity. 
Now, concerning the existence of a discrete lifting $\bL_h$ in a three dimensional domain,
we refer to \cite{ags2016} for an extension of \cite[Theorem A.1]{mms2015} 
to the 3D case, where again the only requirement on the mesh is shape regularity 
(see \cite[Theorem 2.1]{ags2016}).
However, in order to be able to prove the 3D version of the inf-sup condition \eqref{eq:Lifting-inf-sup-condition},
unlike the 2D case, the discrete subspace $\Lambda_h$ must be defined on an independent
triangulation $\Sigma_{\wt{h}}$ of the interface $\Sigma$ formed by triangles of diameter $\wt{h}_{T}$. 
Then, setting
$\wt{h}_\Sigma := \max\big\{ \wt{h}_T :\, T\in \Sigma_{\wt{h}} \big\}$, and defining the set 
of normal traces of $\bH_h(\Omega_\rD)$ as in \eqref{eq:Phih-definition} 
(considering triangles instead of edges), with $h_\Sigma := \max\big\{ h_T :\, T\in \Sigma_{h} \big\}$, 
it can be proved, by extending previous results on mixed methods with Lagrange multipliers
originally provided in \cite{bg2003}, that there exists $C_0\in (0,1)$ such that 
for each pair $(h_\Sigma, \wt{h}_\Sigma)$ verifying $h_\Sigma \leq C_0\,\wt{h}_\Sigma$, 
the 3D version of \eqref{eq:Lifting-inf-sup-condition} is satisfied, see e.g., the second part of the proof of \cite[Lemma 7.5]{ghm2010}.

\subsection{\textit{A priori} error analysis}

In this section we derive the C\'ea estimate for 
the Galerkin scheme \eqref{eq:discrete-mixed-formulation} 
with the finite element subspaces given by \eqref{eq:FEM-1}--\eqref{eq:FEM-2}, and then use 
the approximation properties of the latter to establish the corresponding rates of convergence.
In fact, let $(\bu,(p,\lambda))\in \bH\times \bQ$ and $(\bu_h,(p_h,\lambda_h))\in \bH_h\times \bQ_h$,  
be the unique solutions of the continuous and discrete coupled problems
\eqref{eq:mixed-variational-formulation} and \eqref{eq:discrete-mixed-formulation}, respectively.
Then, we are interested in obtaining an {\it a priori} estimate for the global error
\begin{equation*}
\|\bu - \bu_h\|_{\bH} \,+\, \|(p,\lambda) - (p_h,\lambda_h)\|_{\bQ} \,.
\end{equation*}
For this purpose, we establish next a slight adaptation of the Strang-type estimate 
provided in \cite[Lemma 5.1]{cgo2021}.
Hereafter, given a subspace $X_h$ of a generic Banach space 
$(X,\|\cdot\|_X)$, we set as usual
\begin{center}
$\ds\dist(x,X_h) := \inf_{x_h\in X_h} \|x - x_h\|_{X}$ for all $x\in X$.
\end{center}
\begin{lem}\label{lem:Strang-prob-nonlinear}
Let $X_1, X_2$ and $Y$ be separable and reflexive Banach spaces,
being $X_1$ and $X_2$ uniformly convex, and set $X:=X_1\times X_2$.
Let $\mathcal{A}:X\to X'$ 
be a nonlinear operator and $\mathcal{B}\in \mathcal{L}(X, Y')$, 
such that $\mathcal{A}$ and $\mathcal{B}$ satisfy the hypotheses of 
Theorem {\rm \ref{thm:existence-approximation-nonlinear-problems}} with 
respective constants $L$, $\alpha$, $\beta$, and exponents $p_{1}, p_2 \ge 2$. 
Furthermore, let $\{X_{1,h}\}_{h>0}, \{X_{2,h}\}_{h>0}$ and $\{Y_{h}\}_{h>0}$ 
be sequences of finite dimensional subspaces of $X_1, X_2$, and $Y$, 
respectively. Set $X_h:=X_{1,h}\times X_{2,h}$, and 
consider $\mathcal{A}|_{X_h}:X_h \to X'_h$ 
and $\mathcal{B}|_{X_{h}}: X_{h}\to Y_{h}'$ 
satisfying the hypotheses of Theorem {\rm \ref{thm:existence-approximation-nonlinear-problems}} 
as well, with constants $L_{\ttd}, \alpha_{\ttd}$, and $\beta_{\ttd}$, 
all of them independent of $h$. Finally, given $\mathcal{F}\in X'$, 
$\mathcal{G}\in Y'$, 
we let $(u,\sigma) = ((u_1,u_2),\sigma)\in X\times Y$ and 
$(u_{h},\sigma_{h}) = ((u_{1,h},u_{2,h}),\sigma_{h})\in X_{h}\times Y_{h}$ 
be the unique solutions, respectively, to the problems
\begin{equation}\label{eq:teo-Strang-nolinear}
\begin{array}{rcll}
[\cA(u),v] + [\cB(v),\sigma] & = & [\mathcal{F},v] & \forall\,v\in X\,, \\ [2ex]
[\cB(u),\tau] & = & [\mathcal{G},\tau] & \forall\,\tau\in Y\,,
\end{array}
\end{equation}
and 
\begin{equation}\label{eq:teo-Strang-discrete-nolinear}
\begin{array}{rcll}
[\cA(u_h),v_h] + [\cB(v_h),\sigma_h] & = & [\mathcal{F},v_h] & \forall\,v_h\in X_h\,, \\ [2ex]
[\cB(u_h),\tau_h] & = & [\mathcal{G},\tau_h] & \forall\,\tau_h\in Y_h\,.
\end{array}
\end{equation}
Then, there exists a positive constant $C_{ST}$, depending only on $p_1$, $p_2$,
$L_{\ttd}$, $\alpha_{\ttd}$, $\beta_{\ttd}$, and $\|\cB\|$, such that
\begin{equation*}%\label{eq:teo-Strang-nolinear-a-priori}
\begin{array}{l}
\ds \|u - u_{h}\|_X + \|\sigma - \sigma_h\|_Y \\[1ex]
\ds\quad \leq\, C_{ST}\,C_1(u,u_h) \,
\Big\{ C_2(u) \,\dist(u,X_{h}) \,+\, \sum_{j=1}^{2}\dist(u,X_{h})^{p_{j}-1}
\,+\, \dist(\sigma,Y_{h}) \Big\}\,,
\end{array}
\end{equation*}
where
\begin{equation*}%\label{C1-C2-vec u-vec-u-h}
C_1(u, u_h) := 1 + \sum_{j=1}^{2}\big(\|u_{j}\|_{X_j} + \|u_{j,h}\|_{X_j} \big)^{p_j - 2}
\qan
C_2(u) := 1 + \sum_{j=1}^{2}\|u_{j}\|_{X_j}^{p_{j}-2} \,.
\end{equation*}
\end{lem}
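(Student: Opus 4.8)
The plan is to adapt the standard Babuška--Brezzi argument for nonlinear saddle-point problems: I would control the velocity error through the discrete strong monotonicity of $\cA$ on the discrete kernel, and the multiplier error through the discrete inf-sup condition of $\cB$. Since $X_h\subset X$ and $Y_h\subset Y$, subtracting \eqref{eq:teo-Strang-discrete-nolinear} from \eqref{eq:teo-Strang-nolinear} tested against discrete functions first yields the Galerkin orthogonality relations
\[
[\cA(u)-\cA(u_h),v_h]+[\cB(v_h),\sigma-\sigma_h]=0\quad\forall\,v_h\in X_h,\qquad
[\cB(u-u_h),\tau_h]=0\quad\forall\,\tau_h\in Y_h .
\]
Let $V_h:=\{v_h\in X_h:[\cB(v_h),\tau_h]=0\ \forall\,\tau_h\in Y_h\}$ be the discrete kernel. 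Taking the best approximation $w_h\in X_h$ of $u$ (the infimum is attained since $X_h$ is finite dimensional) and using the discrete inf-sup condition (hypothesis (iii) for $\cB|_{X_h}$ with constant $\beta_{\ttd}$), I would construct $z_h\in X_h$ with $[\cB(z_h),\tau_h]=[\cB(u-w_h),\tau_h]$ for all $\tau_h\in Y_h$ and $\|z_h\|_X\le \beta_{\ttd}^{-1}\|\cB\|\,\|u-w_h\|_X$; then $\widehat u_h:=w_h+z_h$ satisfies $[\cB(\widehat u_h),\tau_h]=[\mathcal{G},\tau_h]$ and $\|u-\widehat u_h\|_X\le (1+\beta_{\ttd}^{-1}\|\cB\|)\,\dist(u,X_h)$. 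By the second orthogonality relation the error $e_h:=u_h-\widehat u_h$ then lies in $V_h$.

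Next I would estimate the velocity. Applying hypothesis (ii) (discrete strong monotonicity, constant $\alpha_{\ttd}$) with the shift $z=\widehat u_h\in X_h$ and the kernel elements $e_h,\,0\in V_h$ gives $\alpha_{\ttd}\,\|e_h\|_X^2\le [\cA(u_h)-\cA(\widehat u_h),e_h]$, which I would split as $[\cA(u_h)-\cA(u),e_h]+[\cA(u)-\cA(\widehat u_h),e_h]$. For the first term the first orthogonality relation gives $[\cA(u_h)-\cA(u),e_h]=[\cB(e_h),\sigma-\sigma_h]$, and since $e_h\in V_h$ this equals $[\cB(e_h),\sigma-\zeta_h]$ for every $\zeta_h\in Y_h$; taking the infimum over $\zeta_h$ bounds it by $\|\cB\|\,\|e_h\|_X\,\dist(\sigma,Y_h)$. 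The second term is handled by the continuity of $\cA$ (hypothesis (i)).

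The delicate point---and the main obstacle---is converting the continuity bound for $\|\cA(u)-\cA(\widehat u_h)\|_{X'}$ into the structure of the statement. Hypothesis (i) produces the factor $\big(\|u_j\|_{X_j}+\|\widehat u_{j,h}\|_{X_j}\big)^{p_j-2}\|u_j-\widehat u_{j,h}\|_{X_j}$. Using $\|\widehat u_{j,h}\|_{X_j}\le \|u_j\|_{X_j}+\|u-\widehat u_h\|_X$, the elementary inequality $(a+b)^{s}\le C_s\,(a^{s}+b^{s})$ for $s=p_j-2\ge 0$, and $\|u-\widehat u_h\|_X\le C\,\dist(u,X_h)$, this factor is controlled by a constant times $\big(\|u_j\|_{X_j}^{p_j-2}\dist(u,X_h)+\dist(u,X_h)^{p_j-1}\big)$. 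Summing over $j$ yields exactly $C_2(u)\,\dist(u,X_h)+\sum_{j}\dist(u,X_h)^{p_j-1}$. Combining with the first term, dividing by $\|e_h\|_X$, and adding $\|u-\widehat u_h\|_X$ through the triangle inequality gives
\[
\|u-u_h\|_X\le C\Big\{C_2(u)\,\dist(u,X_h)+\sum_{j=1}^2\dist(u,X_h)^{p_j-1}+\dist(\sigma,Y_h)\Big\}.
\]

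Finally, for the multiplier I would apply the discrete inf-sup condition to $\zeta_h-\sigma_h\in Y_h$ (with arbitrary $\zeta_h\in Y_h$); the first orthogonality relation gives $[\cB(v_h),\sigma_h-\sigma]=[\cA(u)-\cA(u_h),v_h]$, whence
\[
\beta_{\ttd}\,\|\sigma_h-\zeta_h\|_Y\le \|\cA(u)-\cA(u_h)\|_{X'}+\|\cB\|\,\|\sigma-\zeta_h\|_Y .
\]
Continuity of $\cA$ evaluated at $u$ and $u_h$ yields $\|\cA(u)-\cA(u_h)\|_{X'}\le C\,C_1(u,u_h)\,\|u-u_h\|_X$, which is precisely where the factor $C_1(u,u_h)$ (involving $\|u_{j,h}\|_{X_j}$) enters. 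Taking the infimum over $\zeta_h$, inserting the velocity estimate, adding the two bounds, and using $C_1(u,u_h)\ge 1$ gives the asserted inequality. Beyond the bookkeeping of the nonlinear continuity term described above, the argument is the textbook conforming mixed-method estimate.
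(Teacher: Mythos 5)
Your argument is correct and is precisely the standard Strang-type estimate for nonlinear twofold saddle-point problems: Galerkin orthogonality, construction of a quasi-interpolant $\widehat u_h$ satisfying the discrete second equation via the discrete inf-sup condition, discrete strong monotonicity on the kernel shifted by $\widehat u_h$, and the discrete inf-sup condition again for the multiplier, with the Lipschitz-type hypothesis (i) generating the factors $C_1(u,u_h)$ and $C_2(u)$ exactly as you describe. The paper itself gives no proof of this lemma --- it cites it as a slight adaptation of \cite[Lemma 5.1]{cgo2021} --- and your proof coincides with the argument that reference carries out, so there is nothing to correct beyond noting that the continuity step between $u\in X$ and elements of $X_h$ uses the continuous-level constant $L$ rather than $L_{\ttd}$, a labelling issue already present in the lemma's statement.
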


We now establish the main result of this section.
\begin{thm}\label{thm:cea-estimate}
Let $\rp\in [3,4]$.
Assume that \eqref{eq:hD-condition} holds. 
Let $(\bu,(p,\lambda)) := ((\bu_\rB,\bu_\rD),(p,\lambda)) \in \bH\times\bQ$ and 
$(\bu_h,(p_h,\lambda_h)) := ((\bu_{\rB,h},\bu_{\rD,h}),(p_h,\lambda_h)) \in \bH_h\times \bQ_h$ 
be the unique solutions of the continuous and discrete problems \eqref{eq:mixed-variational-formulation} and \eqref{eq:discrete-mixed-formulation}, respectively. 
Then, there exists $C>0$, independent of $h$ and of the continuous and discrete solutions, such that
\begin{equation}\label{eq:cea-estimate}
\|\bu - \bu_h\|_{\bH} \,+\, \|(p,\lambda) - (p_h,\lambda_h)\|_{\bQ}
\,\leq\, C\,\Big\{ \sum_{j\in \{\rp,2\}} \dist(\bu,\bH_h)^{j-1} + \dist((p,\lambda),\bQ_h) \Big\}.
\end{equation}
\end{thm}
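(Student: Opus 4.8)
The plan is to obtain the Céa estimate \eqref{eq:cea-estimate} as a direct consequence of the abstract Strang-type bound of Lemma \ref{lem:Strang-prob-nonlinear}, applied to the coupled problem \eqref{eq:mixed-variational-formulation} and its Galerkin scheme \eqref{eq:discrete-mixed-formulation}. First I would identify the abstract ingredients with those of our setting: take $X_1=\bH^1_{\Gamma_\rB}(\Omega_\rB)$, $X_2=\bH_{\Gamma_\rD}(\div;\Omega_\rD)$, $Y=\bQ$, $\cA=\ba$, $\cB=\bb$, with exponents $\rp_1=\rp\in[3,4]$ and $\rp_2=2$, and the discrete subspaces $X_{1,h}=\bH_{h,\Gamma_\rB}(\Omega_\rB)$, $X_{2,h}=\bH_{h,\Gamma_\rD}(\Omega_\rD)$, $Y_h=\bQ_h$. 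The hypotheses of Theorem \ref{thm:existence-approximation-nonlinear-problems} hold at the continuous level by Lemmas \ref{lem:continuity-a}, \ref{lem:strong-monotonicity-a}, and \ref{lem:inf-sup-operator-b}, and at the discrete level, with constants independent of $h$, by Lemmas \ref{lem:strong-monotonicity-continuity-a-Hh} and \ref{lem:inf-sup-operator-b-Qh} (this is where the mesh restriction \eqref{eq:hD-condition} is needed). Applying Lemma \ref{lem:Strang-prob-nonlinear} then yields
\begin{equation*}
\|\bu-\bu_h\|_{\bH}+\|(p,\lambda)-(p_h,\lambda_h)\|_{\bQ}
\leq C_{ST}\,C_1(\bu,\bu_h)\Big\{C_2(\bu)\,\dist(\bu,\bH_h)+\sum_{j\in\{\rp,2\}}\dist(\bu,\bH_h)^{j-1}+\dist((p,\lambda),\bQ_h)\Big\}.
\end{equation*}

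The next step is to absorb the solution-dependent factors $C_1(\bu,\bu_h)$ and $C_2(\bu)$ into the final constant $C$. Here $C_2(\bu)=1+\|\bu_\rB\|_{1,\Omega_\rB}^{\rp-2}$ (the second-component term vanishes since $\rp_2=2$), which is bounded using the a priori estimate \eqref{eq:a-priori-bound-u} for the continuous solution in terms of the data. Similarly $C_1(\bu,\bu_h)=1+(\|\bu_\rB\|_{1,\Omega_\rB}+\|\bu_{\rB,h}\|_{1,\Omega_\rB})^{\rp-2}$ is bounded by combining \eqref{eq:a-priori-bound-u} with the uniform discrete a priori bound \eqref{eq:a-priori-bound-uh}; crucially, the latter is independent of $h$, so $C_1(\bu,\bu_h)$ stays bounded uniformly in $h$. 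Since $\dist(\bu,\bH_h)\leq\dist(\bu,\bH_h)^{j-1}$-type terms are already accounted for in the sum and the $C_2(\bu)\,\dist(\bu,\bH_h)$ contribution corresponds to the $j=2$ term, the braces collapse (up to the constant) to $\sum_{j\in\{\rp,2\}}\dist(\bu,\bH_h)^{j-1}+\dist((p,\lambda),\bQ_h)$, giving exactly \eqref{eq:cea-estimate}.

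The main obstacle, and the only genuinely non-routine point, is verifying that the discrete operators $\ba|_{\bH_h}$ and $\bb|_{\bH_h}$ satisfy the hypotheses of Theorem \ref{thm:existence-approximation-nonlinear-problems} with constants \emph{independent of $h$}. The continuity and strong monotonicity of $\ba$ carry over with the same $L_{\tBFD}$ and $\gamma_{\tBFD}$ by Lemma \ref{lem:strong-monotonicity-continuity-a-Hh}, and the discrete inf-sup constant $\wh\beta$ is $h$-independent by Lemma \ref{lem:inf-sup-operator-b-Qh} under \eqref{eq:hD-condition}; this uniformity is precisely what guarantees that $C_{ST}$ and the discrete a priori bound \eqref{eq:a-priori-bound-uh} do not degenerate as $h\to0$, and hence that the final constant $C$ is $h$-independent. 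Once this uniformity is in hand, the remainder is the bookkeeping described above, so the proof reduces to a careful invocation of the abstract lemma followed by the elimination of the solution-dependent multiplicative factors via the a priori estimates.
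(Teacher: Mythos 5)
Your proposal is correct and follows essentially the same route as the paper: a direct application of Lemma \ref{lem:Strang-prob-nonlinear} to the pair \eqref{eq:mixed-variational-formulation}--\eqref{eq:discrete-mixed-formulation}, with the solution-dependent factors $C_1(\bu,\bu_h)$ and $C_2(\bu)$ absorbed into the constant via the a priori bounds \eqref{eq:a-priori-bound-u} and \eqref{eq:a-priori-bound-uh}. Your additional emphasis on the $h$-independence of the discrete constants is a correct and useful elaboration of what the paper leaves implicit.
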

\begin{proof}
First, note that the continuous and discrete problems \eqref{eq:mixed-variational-formulation} 
and \eqref{eq:discrete-mixed-formulation} have the structure of 
\eqref{eq:teo-Strang-nolinear} and \eqref{eq:teo-Strang-discrete-nolinear}, respectively.
Thus, as a direct application of Lemma \ref{lem:Strang-prob-nonlinear}, we obtain
\begin{equation*}%\label{eq:teo-Strang-nolinear-a-priori}
\begin{array}{l}
\ds \|\bu - \bu_h\|_{\bH} \,+\, \|(p,\lambda) - (p_h,\lambda_h)\|_{\bQ} \\[2ex]
\ds \quad \leq\, C_{ST}\,C_1(\bu,\bu_h) \,
\Big\{ C_2(\bu) \,\dist(\bu,\bH_{h}) \,+\, \sum_{j\in \{\rp,2\}} \dist(\bu,\bH_{h})^{j-1}
+ \, \dist((p,\lambda),\bQ_{h}) \Big\}\,,
\end{array}
\end{equation*}
where
\begin{equation*}
C_1(\bu,\bu_h) := 1 + \|\bu_\rB\|^{\rp-2}_{1,\Omega_\rB} + \|\bu_{\rB,h}\|^{\rp-2}_{1,\Omega_\rB}
\qan 
C_2(\bu) := 1 + \|\bu_\rB\|^{\rp-2}_{1,\Omega_\rB}
\end{equation*}
are bounded by data thanks to the {\it a priori} bounds \eqref{eq:a-priori-bound-u} and \eqref{eq:a-priori-bound-uh}. This yields \eqref{eq:cea-estimate} and concludes the proof.
\end{proof}

Now, in order to provide the theoretical rate of convergence of the Galerkin scheme
\eqref{eq:discrete-mixed-formulation}, we recall the approximation properties of the
subspaces involved (see, e.g., \cite{br1985,Brezzi-Fortin,Ern-Guermond,Gatica}).
Note that each one of them is named after the unknown to which it is applied later on.

\bigskip

\noindent $(\mathbf{AP}^{\bu_\rB}_h)$ For each $\bv_\rB\in\bH^2(\Omega_\rB)$, there holds
\begin{equation*}
\|\bv_\rB - \Pi_\rB(\bv_\rB)\|_{1,\Omega_\rB} 
\,\leq\, C\,h\,\|\bv_\rB\|_{2,\Omega_\rB} \,.
\end{equation*}

\noindent $(\mathbf{AP}^{\bu_\rD}_h)$ For each $\bv_\rD\in\bH^1(\Omega_\rD)$ with $\div(\bv_\rD) \in\H^1(\Omega_\rD)$, there holds
\begin{equation*}
\|\bv_\rD - \Pi_\rD(\bv_\rD)\|_{\div;\Omega_\rD} 
\,\leq\, C\,h\,\Big\{ \|\bv_\rD\|_{1,\Omega_\rD} + \|\div(\bv_\rD)\|_{1,\Omega_\rD} \Big\}\,.
\end{equation*}

\noindent $(\mathbf{AP}^{p}_h)$ For each $q \in\H^1(\Omega)\cap \L^2_0(\Omega)$, there exists $q_h\in\L_{h,0}(\Omega)$ such that
\begin{equation*}
\|q - q_h\|_{0,\Omega} 
\,\leq\, C\,h\,\|q\|_{1,\Omega}\,.
\end{equation*}

\noindent $(\mathbf{AP}^{\lambda}_h)$ For each $\xi\in \H^{3/2}(\Sigma)$, there exists $\xi_h\in \Lambda_h(\Sigma)$ such that
\begin{equation*}
\|\xi - \xi_h\|_{1/2,\Sigma} 
\,\leq\, C\,h\,\|\xi\|_{3/2,\Sigma} \,.
\end{equation*}

The following theorem provides the theoretical rate of convergence of the Galerkin
scheme \eqref{eq:discrete-mixed-formulation}, under suitable regularity assumptions on the exact solution.
Notice that, optimal rates of convergences are obtained for all the unknowns.
\begin{thm}\label{thm:rate-of-convergence}
Let $\rp\in [3,4]$.
Assume that \eqref{eq:hD-condition} holds. 
Let $(\bu,(p,\lambda)) := ((\bu_\rB,\bu_\rD),(p,\lambda)) \in \bH\times\bQ$ and 
$(\bu_h,(p_h,\lambda_h)) := ((\bu_{\rB,h},\bu_{\rD,h}),(p_h,\lambda_h)) \in \bH_h\times \bQ_h$ 
be the unique solutions of the continuous and discrete problems \eqref{eq:mixed-variational-formulation} 
and \eqref{eq:discrete-mixed-formulation}, respectively, and assume that 
$\bu_\rB\in\bH^2(\Omega_\rB)$, $\bu_\rD\in\bH^1(\Omega_\rD)$, $\div(\bu_\rD)\in\H^1(\Omega_\rD)$, 
$p\in\H^1(\Omega)$, and $\lambda\in\H^{3/2}(\Sigma)$. 
Then, there exists $C>0$, independent of $h$ and the continuous and discrete solutions, such that
\begin{equation*}%\label{eq:rate-of-convergence}
\begin{array}{l}
\ds \|\bu - \bu_h\|_{\bH} \,+\, \|(p,\lambda) - (p_h,\lambda_h)\|_{\bQ} \\ [2ex]
\ds\quad \leq\, C\,(h + h^{\rp-1})\,\Bigg\{  \sum_{j\in \{\rp,2\}} \big(\|\bu_\rB\|_{2,\Omega_\rB} + \|\bu_\rD\|_{1,\Omega_\rD} + \|\div(\bu_\rD)\|_{1,\Omega_\rD} \big)^{j-1}
+ \|p\|_{1,\Omega} + \|\lambda\|_{3/2,\Sigma} \Bigg\} \,.
\end{array}
\end{equation*}
\end{thm}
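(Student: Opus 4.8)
The plan is to obtain the rate of convergence as an immediate consequence of the C\'ea-type estimate already established in Theorem~\ref{thm:cea-estimate}, by inserting into it the approximation properties $(\bAP^{\bu_\rB}_h)$--$(\bAP^{\lambda}_h)$. Since Theorem~\ref{thm:cea-estimate} furnishes
\begin{equation*}
\|\bu - \bu_h\|_{\bH} + \|(p,\lambda) - (p_h,\lambda_h)\|_{\bQ}
\,\leq\, C\,\Big\{ \sum_{j\in\{\rp,2\}} \dist(\bu,\bH_h)^{j-1} + \dist((p,\lambda),\bQ_h) \Big\}\,,
\end{equation*}
with a constant $C$ already independent of $h$ and of both solutions (the factors $C_1(\bu,\bu_h)$ and $C_2(\bu)$ having been absorbed using the \emph{a priori} bounds \eqref{eq:a-priori-bound-u} and \eqref{eq:a-priori-bound-uh}), it remains only to control the two best-approximation errors on the right-hand side.

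First I would bound $\dist(\bu,\bH_h)$. Recalling that $\bH = \bH^1_{\Gamma_\rB}(\Omega_\rB)\times\bH_{\Gamma_\rD}(\div;\Omega_\rD)$ and $\bH_h = \bH_{h,\Gamma_\rB}(\Omega_\rB)\times\bH_{h,\Gamma_\rD}(\Omega_\rD)$, I would choose the Bernardi--Raugel and Raviart--Thomas interpolants of the exact velocity, namely $(\Pi_\rB(\bu_\rB),\Pi_\rD(\bu_\rD))\in\bH_h$, which indeed respect the discrete boundary conditions thanks to the normal-trace identities \eqref{eq:PiB-edge} and \eqref{eq:PiD-edge}, so that
\begin{equation*}
\dist(\bu,\bH_h) \,\leq\, \|\bu_\rB - \Pi_\rB(\bu_\rB)\|_{1,\Omega_\rB} + \|\bu_\rD - \Pi_\rD(\bu_\rD)\|_{\div;\Omega_\rD}
\,\leq\, C\,h\,\big\{ \|\bu_\rB\|_{2,\Omega_\rB} + \|\bu_\rD\|_{1,\Omega_\rD} + \|\div(\bu_\rD)\|_{1,\Omega_\rD} \big\}\,,
\end{equation*}
where the last inequality uses $(\bAP^{\bu_\rB}_h)$ and $(\bAP^{\bu_\rD}_h)$ together with the assumed regularity. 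In the same way, writing $\bQ = \L^2_0(\Omega)\times\H^{1/2}(\Sigma)$ and $\bQ_h = \L_{h,0}(\Omega)\times\Lambda_h(\Sigma)$, the properties $(\bAP^{p}_h)$ and $(\bAP^{\lambda}_h)$ would yield
\begin{equation*}
\dist((p,\lambda),\bQ_h) \,\leq\, C\,h\,\big\{ \|p\|_{1,\Omega} + \|\lambda\|_{3/2,\Sigma} \big\}\,.
\end{equation*}

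Finally I would substitute these two bounds into the C\'ea estimate. The pressure--multiplier term contributes directly at order $h$, while in the velocity sum the exponent $j-1$ turns the $O(h)$ bound on $\dist(\bu,\bH_h)$ into contributions of order $h^{j-1}$: the value $j=2$ produces an $h$ term and the value $j=\rp$ produces an $h^{\rp-1}$ term, each multiplied by the corresponding power $(\|\bu_\rB\|_{2,\Omega_\rB}+\|\bu_\rD\|_{1,\Omega_\rD}+\|\div(\bu_\rD)\|_{1,\Omega_\rD})^{j-1}$ of the regularity norms. Collecting the common prefactor $(h + h^{\rp-1})$ then gives exactly the asserted estimate. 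I expect no genuine obstacle here, as the argument is a routine substitution; the only point requiring a little care is tracking the nonlinear exponent $j-1$ on the velocity distance so as to land on the precise $(h+h^{\rp-1})$ form, while the admissibility of the chosen interpolants (their membership in the constrained discrete spaces and their trace compatibility) is guaranteed by the interpolation identities \eqref{eq:PiB-edge}--\eqref{eq:PiD-edge} and by the statements of $(\bAP^{p}_h)$ and $(\bAP^{\lambda}_h)$ themselves.
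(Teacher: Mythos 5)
Your proposal is correct and follows exactly the route of the paper, whose proof of this theorem consists precisely of invoking Theorem~\ref{thm:cea-estimate} together with the approximation properties $(\mathbf{AP}^{\bu_\rB}_h)$--$(\mathbf{AP}^{\lambda}_h)$ and omitting further details. Your elaboration (choosing $(\Pi_\rB(\bu_\rB),\Pi_\rD(\bu_\rD))$ as admissible competitors via \eqref{eq:PiB-edge} and \eqref{eq:PiD-edge}, and tracking the exponents $j-1$ to arrive at the $(h+h^{\rp-1})$ prefactor) is a faithful and complete filling-in of exactly those omitted details.
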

\begin{proof}
The result follows from a direct application of Theorem \ref{thm:cea-estimate} and the approximation properties of the discrete subspaces. Further details are omitted.
\end{proof}

%**********************************************************************
%**********************************************************************

\section{Numerical results}\label{sec:numerical-results}

In this section we present two examples illustrating the performance of 
the mixed finite element scheme \eqref{eq:discrete-mixed-formulation} on a set of 
quasi-uniform triangulations of the corresponding domains. 
Our implementation is based on a {\tt FreeFem++} code \cite{freefem}, 
in conjunction with the direct linear solver {\tt UMFPACK} \cite{umfpack}. 
In order to solve the nonlinear problem \eqref{eq:discrete-mixed-formulation}, 
given $\0\neq \bw_\rB\in\bH^1_{\Gamma_\rB}(\Omega_\rB)$ we introduce the G\^ateaux derivative associated to $\ba$ (cf. \eqref{eq:definition-a}), i.e.,
\begin{align*}
[\cD\ba(\bw_\rB)(\bu),\bv] 
\,:=\,\, & \mu\,(\nabla\bu_\rB,\nabla\bv_\rB)_\rB 
+ (\bK^{-1}_\rB\bu_\rB,\bv_\rB)_\rB
+ \tF\,( |\bw_\rB|^{\rp-2}\bu_\rB,\bv_\rB )_\rB \\[2ex]
& +\, \tF\,(\rp-2)( |\bw_\rB|^{\rp-4}(\bw_\rB\cdot\bu_\rB)\bw_\rB,\bv_\rB )_\rB
+ \left(\bK^{-1}_\rD \bu_\rD,\bv_\rD\right)_\rD \,,
\end{align*}
for all $\bu,\bv \in\bH$.
In this way, we propose the Newton-type strategy: Given $\0\neq\bu^0_{\rB,h}\in \bH_{h,\Gamma_\rB}(\Omega_\rB)$, for $m\geq 1$, find 
$\bu^m_h = (\bu^m_{\rB,h},\bu^m_{\rD,h})\in\bH_h$ and $(p^m_h, \lambda^m_h)\in \bQ_h$, such that
\begin{equation}\label{eq:iterative-newton-method}
\begin{array}{lll}
[\cD\ba(\bu^{m-1}_{\rB,h})(\bu^m_h),\bv_h] + [\bb(\bv_h),(p^m_h,\lambda^m_h)] & = & [\f,\bv_h] + \tF\,(\rp-2)\,( |\bu^{m-1}_{\rB,h}|^{\rp-2}\bu^{m-1}_{\rB,h},\bv_{\rB,h} )_\rB \,, \\ [2ex]
[\bb(\bu^m_h),(q_h,\xi_h)] & = & [\g,(q_h,\xi_h)] \,,
\end{array}
\end{equation}
for all $\bv_h = (\bv_{\rB,h},\bv_{\rD,h})\in\bH_h$ and $(q_h,\xi_h)\in\bQ_h$.

The iterative method is stopped when the relative error between 
two consecutive iterations of the complete coefficient vector, namely $\coeff^m$ 
and $\coeff^{m+1}$, is sufficiently small, that is
\begin{equation*}
\frac{\|\coeff^{m+1} - \coeff^m\|_{\ell^2}}{\|\coeff^{m+1}\|_{\ell^2}} \,\leq\, \tol \,,
\end{equation*}
where $\|\cdot\|_{\ell^2}$ is the standard $\ell^2$-norm in $\R^{\DOF}$, 
with $\DOF$ denoting the total number of degrees of
freedom defining the finite element subspaces $\bH_{h,\Gamma_\rB}(\Omega_{\rB}), \bH_{h,\Gamma_\rD}(\Omega_{\rD}), \L_{h,0}(\Omega)$, and $\Lambda_h(\Sigma)$, 
and $\tol$ is a fixed tolerance chosen as $\tol=1\textup{E}-06$.

The errors for each variable are denoted by:
\begin{equation*}
\begin{array}{c}
\re(\bu_\rB) \,:=\, \|\bu_\rB - \bu_{\rB,h}\|_{1,\Omega_\rB}\,,\quad 
\re(\bu_\rD) \,:=\, \|\bu_\rD - \bu_{\rD,h}\|_{\div;\Omega_\rD} \,, \\ [2ex]
\re(p_\rB) \,:=\, \|p_\rB - p_{\rB,h}\|_{0,\Omega_\rB}\,,\quad
\re(p_\rD) \,:=\, \|p_\rD - p_{\rD,h}\|_{0,\Omega_\rD}\,,\quad 
\re(\lambda) \,:=\, \|\lambda - \lambda_h\|_{1/2,\Sigma} \,.
\end{array}
\end{equation*}
Notice that, for ease of computation, the interface norm 
$\|\lambda - \lambda_h\|_{1/2,\Sigma}$ will be replaced by $\|\lambda - \lambda_h\|_{(0,1),\Sigma}$ with
\begin{equation*}
\|\xi\|_{(0,1),\Sigma} \,:=\, \|\xi\|^{1/2}_{0,\Sigma} \, \|\xi\|^{1/2}_{1,\Sigma} 
\quad \forall\,\xi\in \H^1(\Sigma) \,,
\end{equation*}
owing to the fact that $\H^{1/2}(\Sigma)$ is the interpolation space with index $1/2$
between $\H^1(\Sigma)$ and $\L^2(\Sigma)$.

Moreover, the respective experimental rates of convergence are computed as
\begin{equation*}
\sr(\diamond) \,:=\, \frac{\log(\re(\diamond)/\wh{\re}(\diamond))}{\log(h/\wh{h})} \quad 
\mbox{for each } \diamond\in\Big\{ \bu_\rB, \bu_\rD, p_\rB, p_\rD, \lambda \Big\}\,,
\end{equation*}
where $h$ and $\wh{h}$ denote two consecutive mesh sizes with errors $\re$ and $\wh{\re}$, respectively.

For each example shown below we take $\bu^0_{\rB,h} =(0.1,0)$ as initial guess.
In addition, the condition $(p_h,1)_{\Omega} = 0$ is imposed via a penalization strategy.

\subsection*{Example 1: Tombstone-shaped domain with varying $\tF, \bK_{\rB}$, and $\bK_{\rD}$ parameters.}

In our first example, we validate the rates of convergence in a two-dimensional domain
and also study the performance of the numerical method with respect to the number of
Newton iterations when different
values of the parameters $\tF$, $\bK_{\rB}$ and $\bK_{\rD}$ are considered.
More precisely, we consider a semi-disk-shaped porous domain coupled with a porous unit square, 
i.e.,
\begin{equation*}
\Omega_\rB := \Big\{ (x_1,x_2) :\quad x^2_1 + (x_2-0.5)^2 < 0.5^2,\,\, x_2>0.5 \Big\} \qan \Omega_\rD := (-0.5,0.5)^2\,,
\end{equation*}
with interface $\Sigma := (-0.5,0.5)\times\{0.5\}$.
We consider the model parameter $\rp=3$, $\mu=1$, $\tF=10$, $\bK_{\rB}=\bI$, $\bK_{\rD}=10^{-1}\,\bI$, and the data $\f_\rB, \f_\rD$, and $g_\rD$ are chosen so that  the exact solution in the tombstone-shaped porous domain $\Omega = \Omega_\rB\cup\Sigma\cup\Omega_\rD$ 
is given by the smooth functions
\begin{equation*}
\begin{array}{c}
\bu_\rB(x_1,x_2) := \left(\begin{array}{r}
\cos(\pi x_1)\sin(\pi x_2) \\ -\sin(\pi x_1)\cos(\pi x_2)
\end{array}\right),\quad 
\bu_\rD(x_1,x_2) := \left(\begin{array}{r}
\cos(\pi x_1)\exp(x_2) \\ \exp(x_1)\cos(\pi x_2)  
\end{array}\right),\\ [3ex]
p_\star(x_1,x_2) := \sin(\pi x_1)\sin(\pi x_2) \qin \Omega_\star,\quad \mbox{ with } \star\in\{\rB,\rD\}.
\end{array}
\end{equation*}
Notice that this solution satisfies $\bu_\rB\cdot\bn = \bu_\rD\cdot\bn$ on $\Sigma$. 
However, the second transmission condition in \eqref{eq:transmission-condition} is not satisfied, 
and the Dirichlet boundary condition for the Brinkman--Forchheimer velocity on $\Gamma_\rB$ and 
the Neumann boundary condition for the Darcy velocity on $\Gamma_\rD$ are both 
non-homogeneous. This gives rise to additional contributions that are included in the right-hand side of the resulting system. 
The results reported in Table \ref{table1-ex1-rate} agree with 
the theoretical optimal rate of convergence $O(h)$ provided by Theorem \ref{thm:rate-of-convergence}.
Some components of the numerical solution are displayed in Figure \ref{fig:Example-1}, 
and they were computed using the mixed $\bBR - \bRT_0 - \rP_0$
approximation with mesh size $h=0.013$ and $53,511$ triangle elements (corresponding to 
$148,928\,\DOF$). 
We observe that the continuity of the normal trace of the velocities on $\Sigma$ is preserved since the second components of $\bu_\rB$ and $\bu_\rD$ do coincide on $\Sigma$ as expected. 
It can also be seen that the pressure is continuous in the whole domain and preserves its sinusoidal behavior.

In Table \ref{table1-ex1-Newton}, we report the number of Newton iterations as a function of the parameters $\tF$, $\bK_{\rB}$ and $\bK_{\rD}$, considering different mesh sizes $h$. We can observe that Newton's method is robust with respect to both $h$ and $\bK_{\rD}$, while the number of iterations increases for larger values of $\tF$ due to the increased weight of the nonlinear term $\tF\,|\bu_\rB|\bu_\rB$ in the Brinkman--Forchheimer model. 
%In this way, here we illustrate that the inertial term $\tF\,|\bu_\rB|^{\rp-2}\bu_\rB$, with $\rp=3$, is well handled by the mixed finite element method \eqref{eq:discrete-mixed-formulation}, and that the latter evidences a robust behavior with respect to the parameter $\tF$. In fact, only $9$ Newton iterations are required to converge in the most challenging case, namely $\tF = 10^4$ m$^{-1}$.

\subsection*{Example 2: Flow through a heterogeneous porous media.}

In our second example, we study the behavior of the numerical method for different values of $\tF$ when $\rp = 4$ to model the higher-order inertial correction $\tF\,|\bu_{\rB}|^{2}\bu_{\rB}$ discussed, e.g., in \cite{Firdaouss:1997:jfm}.
We consider the rectangular domain $\Omega=\Omega_{\rB}\cup \Sigma\cup \Omega_{\rD}$,
where 
\begin{equation*}
\Omega_\rB :=(0,2)\times (0,1),\quad \Sigma:=(0,2)\times \{0\},\qan \Omega_\rD:=(0,2)\times (-1,0)\,,
\end{equation*}
with boundaries $\Gamma_\rB = \Gamma_{\rB,\textrm{left}}\cup \Gamma_{\rB,\textrm{top}}\cup \Gamma_{\rB,\textrm{right}}$ and $\Gamma_\rD=\Gamma_{\rD,\textrm{left}}\cup \Gamma_{\rD,\textrm{bottom}}\cup \Gamma_{\rD,\textrm{right}}$, respectively. 
The problem parameters are $\mu=1$, $\bK_{\rB}=10^{-1}\,\bbI$ and $\bK_{\rD}=10^{-3}\,\bI$.
The right-hand side data $\f_\rB, \f_{\rD}$, and $g_\rD$ are chosen as zero, and the boundary conditions are
\begin{equation*}
\begin{array}{l}
\ds \bu_\rB = (-10\,x_2\,(x_2 - 1), 0)^\rt  \qon \Gamma_{\rB,\textrm{left}}\,,\quad
\bu_{\rB} = \0 \qon \Gamma_{\rB,\textrm{top}} \,,\quad
\bsi_\rB\bn = \0 \qon \Gamma_{\rB,\textrm{right}} \,, \\[2ex]
\ds p_\rD = 0 \qon \Gamma_{\rD,\textrm{bottom}} \,,\quad
\bu_\rD\cdot\bn = 0 \qon \Gamma_{\rD,\textrm{left}}\cup \Gamma_{\rD,\textrm{right}} \,.
\end{array}
\end{equation*}

In Figure~\ref{fig:Example-2}, we plot the magnitude of the second component of the velocity in the whole domain for $\tF\in \big\{ 0, 10^{0}, 10^{1}, 10^{2}, 10^{3}, 10^{4} \big\}$, computed using the mixed approximation \eqref{eq:iterative-newton-method} on a mesh with $37,238$ triangular elements (corresponding to $112,771\,\DOF$). As expected, we observe that most of the flow is moving from left to right in the more permeable Brinkman--Forchheimer domain while part of it is driven into the less permeable Darcy medium due to zero pressure at the bottom of the domain. 
For all considered values of $\tF$, the continuity of the normal velocity across the interface is preserved illustrating the mass conservation on $\Sigma$. 
Finally, we notice that, when $\tF$ increases, the magnitude of the vertical component of the velocity decreases at the interface. The number of Newton iterations for the different values of $\tF$ is $\{1, 4, 5, 6, 7, 8\}$, respectively, and we notice that it increases when $\tF$ becomes larger in agreement with what observed in Example 1. (When $\tF=0$ the problem becomes linear, hence only one Newton iteration is performed). 

\bigskip\medskip

%%%%%%%%%%%%    Table1 - example1  %%%%%%%%%%%
\begin{table}[ht]
\begin{center}		
\begin{tabular}{r|c|c||c|c|c|c}
\hline
$\DOF$  & $h_\rB$ & $\iter$ & $\re(\bu_\rB)$ & $\sr(\bu_\rB)$ & $\re(p_\rB)$ & $\sr(p_\rB)$ \\  \hline \hline
172    & 0.330 & 4 & 0.269 &   --  & 1.351 &  --   \\ 
646    & 0.192 & 4 & 0.140 & 1.198 & 0.086 & 5.063 \\ 
2398   & 0.091 & 4 & 0.072 & 0.892 & 0.027 & 1.533 \\ 
9373   & 0.049 & 4 & 0.037 & 1.042 & 0.013 & 1.157 \\ 
37434  & 0.024 & 4 & 0.017 & 1.157 & 0.006 & 1.116 \\ 
148928 & 0.013 & 4 & 0.009 & 1.037 & 0.003 & 1.087 \\ 
\hline 
\end{tabular}
			
\medskip
			
\begin{tabular}{c||c|c|c|c||c||c|c}
\hline
$h_\rD$ & $\re(\bu_\rD)$ & $\sr(\bu_\rD)$ & $\re(p_\rD)$ & $\sr(p_\rD)$ & $h_\Sigma$ & $\re(\lambda)$ & $\sr(\lambda)$ \\  \hline \hline
0.373 & 0.729 &   --  & 2.112 &   --  & 1/2  & 2.337 &   --  \\ 
0.190 & 0.322 & 1.217 & 0.106 & 4.451 & 1/4  & 0.246 & 3.249 \\ 
0.098 & 0.165 & 1.004 & 0.033 & 1.760 & 1/8  & 0.072 & 1.765 \\ 
0.054 & 0.084 & 1.114 & 0.015 & 1.258 & 1/16 & 0.025 & 1.554 \\ 
0.025 & 0.042 & 0.912 & 0.008 & 0.934 & 1/32 & 0.008 & 1.547 \\ 
0.014 & 0.021 & 1.282 & 0.004 & 1.288 & 1/64 & 0.003 & 1.537 \\ 
\hline 
\end{tabular}
\caption{[{\sc Example 1}] Degrees of freedom, mesh sizes, Newton iteration count, errors, and convergence history for the approximation of the coupled Brinkman--Forchheimer/Darcy problem with $\tF=10$, $\bK_{\rB}=\bI$, $\bK_{\rD}=10^{-1}\bI$, and $\rp=3$.}
\label{table1-ex1-rate}
\end{center}
\end{table}

%%%%%%%%%%%%    Table1 - example2 - uniform  %%%%%%%%%%%
\begin{table}[ht]
\begin{center}
\begin{tabular}{c|c|c||c|c|c|c|c|c}
\hline
$\tF$ & $\bK_{\rB}$ & $\bK_{\rD}$ & $h=0.373$ & $h=0.192$ & $h=0.098$ & $h=0.054$ & $h=0.025$ & $h=0.014$ \\  \hline \hline
$10$  & $1$ & $10^{-1}$ & 4 & 4 & 4 & 4 & 4 & 4 \\ 
$10$  & $1$ & $10^{-2}$ & 4 & 4 & 4 & 4 & 4 & 4 \\ 
$10$  & $1$ & $10^{-3}$ & 4 & 4 & 4 & 4 & 4 & 4 \\ 
$10$  & $1$ & $10^{-4}$ & 3 & 4 & 4 & 4 & 4 & 4 \\ 
$1$    & $1$ & $10^{-1}$ & 4 & 4 & 4 & 4 & 4 & 4 \\ 
$10^2$ & $1$ & $10^{-1}$ & 6 & 6 & 6 & 6 & 6 & 6 \\ 
$10^3$ & $1$ & $10^{-1}$ & 8 & 8 & 8 & 8 & 8 & 8 \\ 
$10^4$ & $1$ & $10^{-1}$ & 9 & 9 & 9 & 9 & 9 & 9 \\ 
\hline 
\end{tabular}
\caption{[{\sc Example 1}] Number of Newton iterations for different values of $\tF$, $\bK_{\rB}$ and $\bK_{\rD}$.}\label{table1-ex1-Newton}
\end{center}
\end{table}

\begin{figure}[ht!]
\begin{center}
\includegraphics[width=4.5cm]{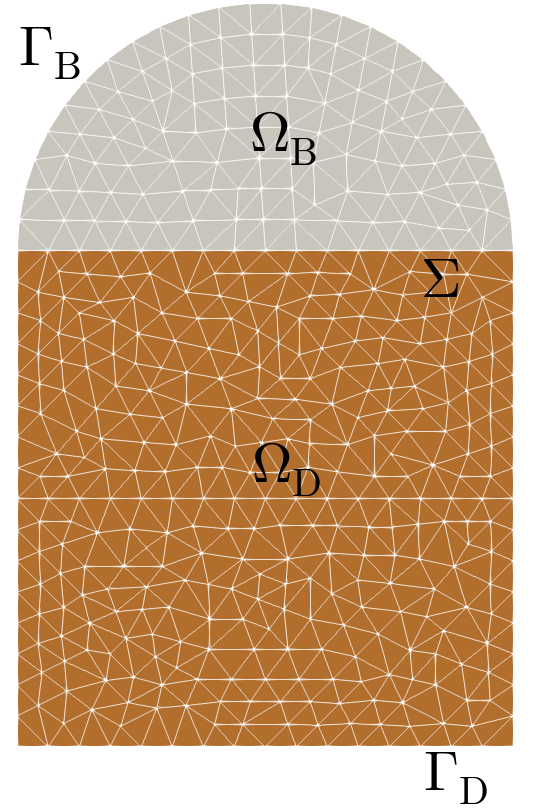}
\hspace{1cm}
\includegraphics[width=4.5cm]{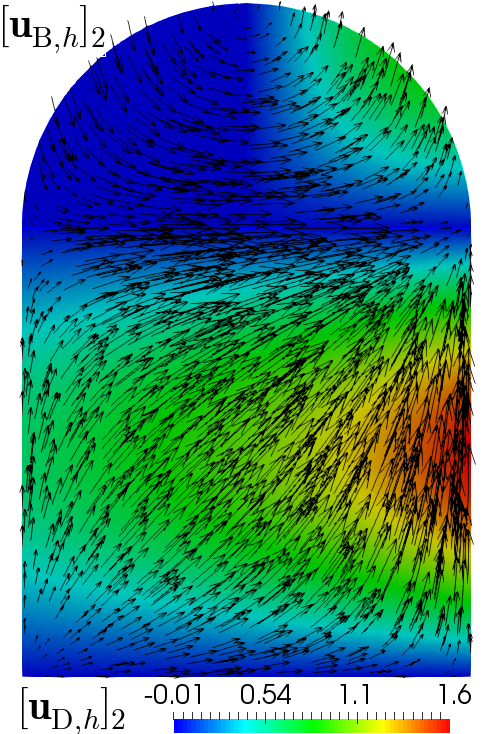}
\hspace{1cm}
\includegraphics[width=4.5cm]{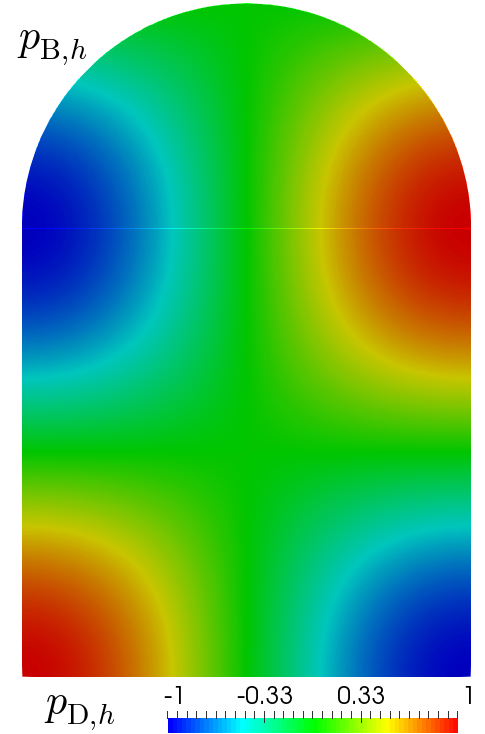}
		
\caption{[{\sc Example 1}] Domain configuration, computed velocity field and magnitude of its second component, and pressure field in the whole domain.}\label{fig:Example-1}
\end{center}
\end{figure}

\begin{figure}[ht!]
\begin{center}
\includegraphics[width=5.5cm]{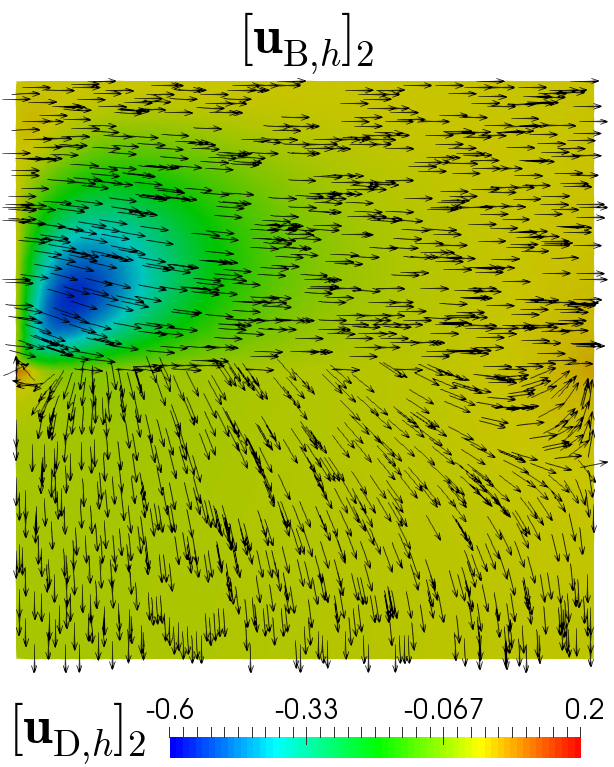}
\includegraphics[width=5.5cm]{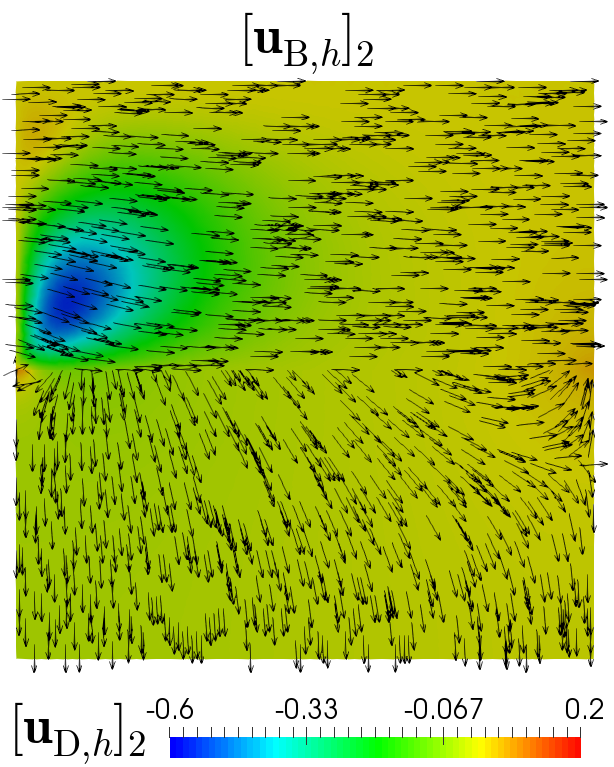}
\includegraphics[width=5.5cm]{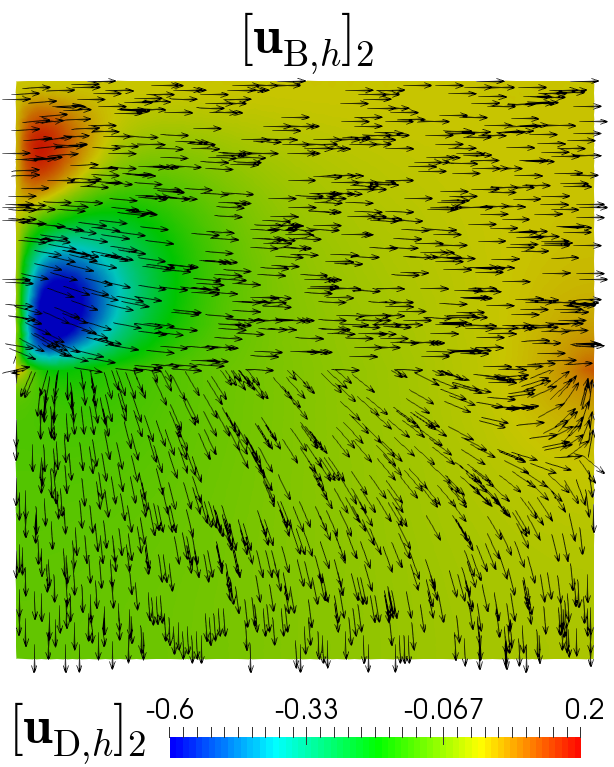}

\vspace{1cm}

\includegraphics[width=5.5cm]{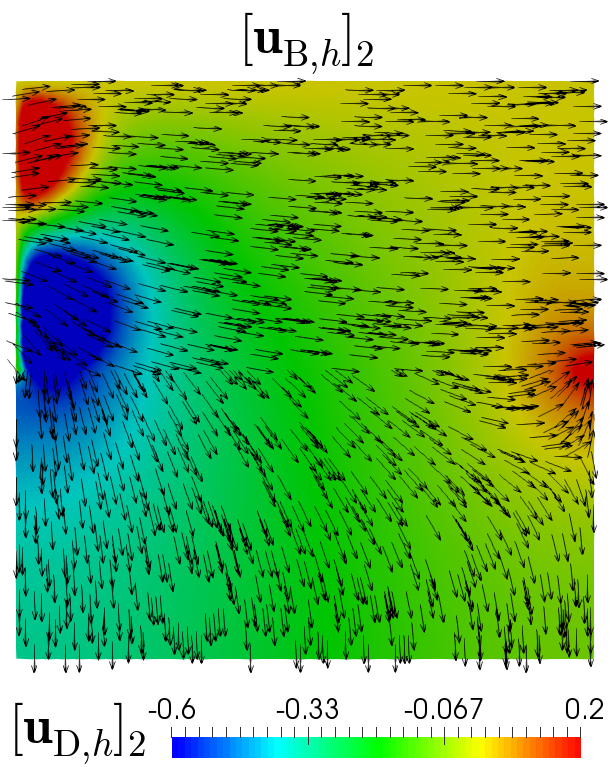}
\includegraphics[width=5.5cm]{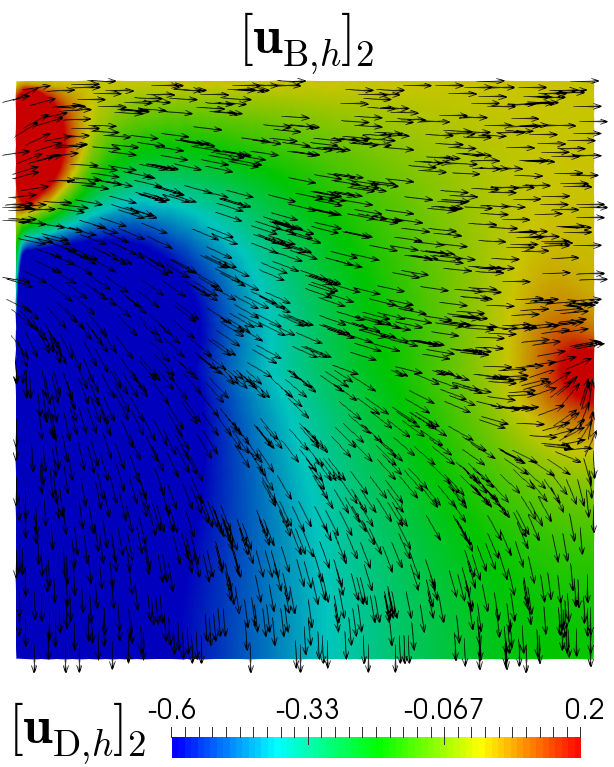}
\includegraphics[width=5.5cm]{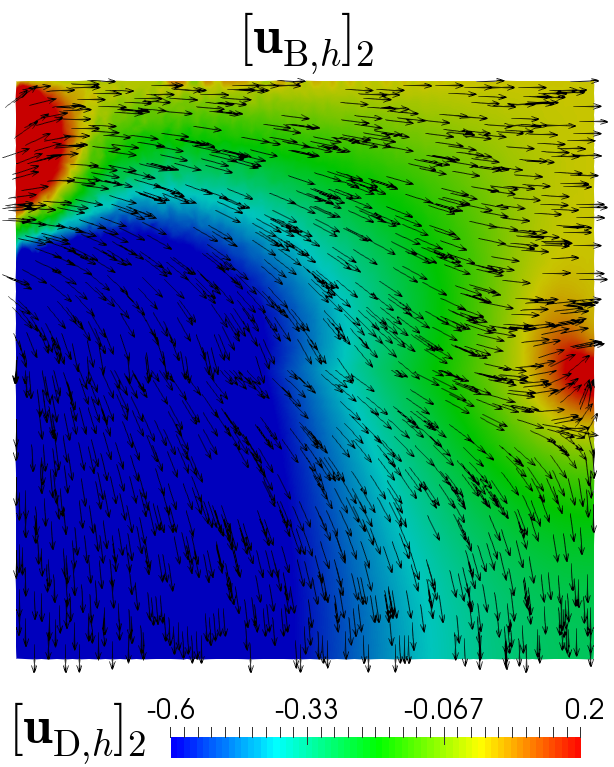}
		
\caption{[{\sc Example 2}] From left to right and from up to-down: magnitude of the second component of the velocity in the whole domain for $\tF\in \{0,10^0,10^1,10^2,10^3,10^4\}$.}\label{fig:Example-2}
\end{center}
\end{figure}

%**********************************************************************
%**********************************************************************

\end{document}